\documentclass[letterpaper,12pt,reqno]{amsart}
\usepackage{graphicx,amsmath,amssymb,amsthm}

\usepackage{srcltx}
\usepackage[latin1]{inputenc}
\usepackage[T1]{fontenc}

\usepackage{fullpage}

\newtheorem{X}{X}[section]

\newtheorem{definition}[X]{Definition}
\newtheorem*{hypothesis}{Hypothesis LI}

\newtheorem{lemma}[X]{Lemma}
\newtheorem{proposition}[X]{Proposition}
\newtheorem{theorem}[X]{Theorem}
\newtheorem{conjecture}[X]{Conjecture}

\theoremstyle{definition}
\newtheorem{remark}[X]{Remark}
\newcommand{\V}{\text{Var}}

\newcommand{\E}{\mathbb E}

\renewcommand{\P}{\text{Prob}}

\title{The distribution of the variance of primes in arithmetic progressions}
\author{Daniel Fiorilli \\ Draft: \today }
\address{Department of Mathematics, University of Michigan, 530 Church Street, Ann Arbor MI 48109 USA}
\email{fiorilli@umich.edu}

\begin{document}

\begin{abstract}

Hooley conjectured that the variance $V(x;q)$ of the distribution of primes up to $x$ in the arithmetic progressions modulo $q$ is asymptotically $x\log q$, in some unspecified range of $q\leq x$. On average over $1\leq q \leq Q$, this conjecture is known unconditionally in the range $x/(\log x)^A \leq Q \leq x$; this last range can be improved to $x^{\frac 12+\epsilon} \leq Q \leq x$ under the Generalized Riemann Hypothesis (GRH).
We argue that Hooley's conjecture should hold down to $(\log\log x)^{1+o(1)} \leq q \leq x$ \emph{for all values of $q$}, and that this range is best possible. We show under GRH and a linear independence hypothesis on the zeros of Dirichlet $L$-functions that for moderate values of $q$, $\phi(q)e^{-y}V(e^y;q)$ has the same distribution as that of a certain random variable of mean asymptotically $\phi(q)\log q$ and of variance asymptotically $2\phi(q)(\log q)^2$. Our estimates on the large deviations of this random variable allow us to predict the range of validity of Hooley's Conjecture.

\end{abstract}
\maketitle



\section{Introduction}


Define the variance of the distribution of primes in arithmetic progressions as\footnote{This variance is sometimes defined with $\psi(x;\chi_0)$ replaced by either $x$ or $\psi(x)$, however these definitions are all equivalent for our purpose, which is to study the validity of the asymptotic $V(x;q)\sim x\log q$.}
$$ V(x;q):= \sum_{\substack{a\bmod q \\ (a,q)=1}} \left| \psi(x;q,a)-\frac {\psi(x,\chi_0)}{\phi(q)} \right|^2.$$
The study of this important quantity has a long history.
One of the major applications of the large sieve is the Barban-Davenport-Halberstam Theorem \cite{B,DH,G}, which asserts that the average of $V(x;q)$ over $1\leq q \leq Q$ is $O(x\log Q)$, in the range $x/(\log x)^A\leq Q\leq x$.
An asymptotic result in this range was first obtained by Montgomery \cite{M}, who showed that for any fixed $A$, we have for $Q\leq x$ that
\begin{equation}
\frac 1Q\sum_{q\leq Q}V(x;q)= x\log x+O_A\left(x\log(2x/Q)+\frac{x^2}{Q(\log x)^A} \right). 
\label{equation montgomery theorem}
\end{equation} 
This estimate was refined by Hooley \cite{Ho1}, who showed that in the same range,
\begin{equation}
\frac 1Q\sum_{q\leq Q}V(x;q)= x\log Q-cx +O_{A}\left(Q^{\frac 14}x^{\frac 34}+\frac{x^2}{Q(\log x)^A} \right), 
\label{equation hooley}
\end{equation} 
where $c=\gamma+\log (2\pi) +1+\sum_p \frac{\log p}{p(p-1)}$. Hooley \cite{Ho1,Ho6} also showed that the error term in \eqref{equation hooley} could be replaced by $O_{\epsilon}(Q^{\frac 14}x^{\frac 34} + x^{\frac 32 +\epsilon}/Q)$ under the Generalized Riemann Hypothesis (GRH), extending the range of validity of the asymptotic $\frac 1Q\sum_{q\leq Q}V(x;q) \sim x\log Q$ to $x^{\frac 12+\epsilon} < Q \leq x$. The most precise result so far under GRH is that of Goldston and Vaughan \cite{GV}, which states that for $Q\leq x$,
\begin{equation}
 \frac 1Q \sum_{q\leq Q} V(x;q) = x \log Q -cx  +O_{\epsilon}\left(Q (x/Q)^{\frac 14+\epsilon} +\frac{x^{\frac 32}}Q (\log 2x)^{\frac 52} (\log \log 3x)^{2} \right). 
 \label{equation Goldston Vaughan}
\end{equation}

As for individual values of $q$, Hooley \cite{Ho2,Ho5} conjectured that in some range of $q\leq x$ we have 
\begin{equation}
V(x;q)\sim x\log q.
\label{equation hooley conjecture}
\end{equation} 
It is still an open problem to determine whether \eqref{equation hooley conjecture} holds in any range of $q$. A lower bound of the conjectured order of magnitude was obtained by Friedlander and Goldston \cite{FG}, who showed that in the range $x/(\log x)^A \leq q \leq x,$
$$ V(x;q)\geq \left(\frac 12-o(1)\right) x\log q.$$
Hooley \cite{Ho4} improved the range of validity of this bound to $\exp(-C\sqrt{\log x}) <q \leq x$.

Similar bounds are known in a wider range of $q$ under GRH \cite{FG,FG2, Ho4}; the most precise lower bound appearing in the literature is 
\begin{equation}
V(x;q) \geq \left( \frac 32 -\frac 1{\alpha} -o(1)\right) x\log q
\label{equation conditional lower bound on V(x;q)}
\end{equation}  
in the range $x^{\frac 23+\epsilon} \leq q \leq x$, where $\alpha=\log q/\log x$. As for upper bounds, Turan \cite{Tu} has shown under GRH that $V(x;q) \ll x(\log x)^4$.
Friedlander and Goldston \cite{FG} have shown that if in addition to GRH one assumes a strong version of the Hardy-Littlewood Conjecture on prime pairs, then \eqref{equation hooley conjecture} holds in the range $x^{\frac 12+\epsilon} \leq q \leq x$.
More precisely, they show that if we assume the (ordinary) Riemann Hypothesis and we assume for fixed $\epsilon>0$ and in the range $0<|k|\leq x$ that 
$$  \sum_{\max(0,-k)<n\leq \min(x,x-k) } \Lambda(n)\Lambda(n+k) = \mathfrak S(k) (x-|k|) +O(x^{\frac 12+\epsilon}), $$
where the singular series for prime pairs is defined by 
\begin{equation}
\label{equation singular series}
\mathfrak S(k) := \begin{cases}
2 \prod_{p\neq 2} \left( 1-\frac 1{(p-1)^2}\right) \prod_{\substack{p\mid k \\ p\neq 2}} \left( \frac{p-1}{p-2}\right) & \text{ if } k\neq 0 \text{ is even},\\
0 &\text{ if }k \text{ is odd,}
\end{cases} 
\end{equation}
then we have for $x^{\frac 12+\epsilon} \leq q \leq x^{1-\epsilon}$ the estimate
\begin{equation}
\frac{V(x;q)}x = \log q -\bigg( \gamma +\log(2\pi) + \sum_{p\mid q} \frac{\log p}{p-1} \bigg)+O(x^{-\delta(\epsilon)}).
\label{equation conjecture FG}
\end{equation} 
In the range $x^{1-\epsilon} \leq q \leq x$, their estimate takes the form $V(x;q)/x = \log q +O((\log\log q)^3)$. Finally, Vaughan \cite{V} established upper bounds on the general $k$-th moment of the error term in \eqref{equation conjecture FG} for $Q/2<q\leq Q$ in the range $x(\log x)^{-A} \leq Q \leq x$ (unconditionally) and $x^{\frac 34+\epsilon} \leq Q \leq x$ (under GRH).

%

We wish to emphasize that \eqref{equation Goldston Vaughan} gives an asymptotic result for $Q$ in the range $x^{\frac 12+\epsilon} \leq Q \leq x$, and that \textit{no asymptotic results are known in the range $Q\leq x^{\frac 12}$}, even conditionally. Moreover, \eqref{equation conditional lower bound on V(x;q)} gives the correct order of magnitude of $V(x;q)$, provided $x^{\frac 23+\epsilon} \leq q \leq x$. Even under a strong version of the Hardy-Littlewood Conjecture, the best known range of validity of \eqref{equation hooley conjecture} is $x^{\frac 12+\epsilon} \leq q \leq x$. Therefore, the behaviour of $V(x;q)$ for $q\leq x^{\frac 12}$ is a mystery and it is not clear whether an asymptotic formula such as \eqref{equation hooley conjecture} should hold in this range. As Friedlander and Goldston \cite{FG} put it, 
\begin{quote}
It may well be that these also hold for smaller $q$, but below $q=x^{\frac 12}$ we are somewhat skeptical.
\end{quote} 

In a recent paper, Keating and Rudnick \cite{KR} (see also \cite{K1,K2}) established a function field analogue of Hooley's conjecture which suggests that \eqref{equation hooley conjecture} might hold in the extended range $x^{\epsilon} \leq q\leq x$, for any fixed $\epsilon>0$. 

In the current paper we establish a probabilistic result which suggests that \eqref{equation hooley conjecture} should hold all the way down to $q=(\log\log x)^{1+\delta}$, and should not hold in the range $q\leq(\log\log x)^{1-\delta}$. 

\begin{conjecture}
Fix $\delta>0$. In the range $(\log\log x)^{1+\delta}\leq q\leq x$ we have
$$ V(x;q)\sim x\log q.$$

\label{conjecture Hooley in large range}
\end{conjecture}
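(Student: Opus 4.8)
\medskip
\noindent\emph{Sketch of the heuristic.} Conjecture~\ref{conjecture Hooley in large range} is a prediction read off from a probabilistic model rather than a theorem, so the plan has two parts: first build and analyse the model rigorously under GRH and a linear independence hypothesis on the ordinates of the zeros of the Dirichlet $L$-functions modulo $q$, and then extract the threshold for Hooley's asymptotic from the large deviations of the limiting random variable. To set up the model I would begin from the orthogonality relation $\phi(q)V(x;q)=\sum_{\chi\neq\chi_0}\abs{\psi(x,\chi)}^2$ together with the GRH explicit formula
$$\psi(x,\chi)=-\sum_{\gamma_\chi}\frac{x^{\frac12+i\gamma_\chi}}{\tfrac12+i\gamma_\chi}+O\bigl((\log qx)^2\bigr),$$
each $\chi$ being replaced by the primitive character inducing it (so that the removed Euler factors pass into the error term). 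Writing $x=e^y$, truncating the sum over zeros at a suitable height, and invoking the linear independence hypothesis to make the phases $(e^{iy\gamma_\chi})_{\gamma_\chi}$ equidistribute on a torus as $y$ runs over $[0,Y]$, one gets that $\phi(q)e^{-y}V(e^y;q)$ converges in law, as $Y\to\infty$, to
$$X_q:=\sum_{\chi\neq\chi_0}W_\chi,\qquad W_\chi:=\Bigl\lvert\,\sum_{\gamma_\chi}\frac{e^{i\theta_{\gamma_\chi}}}{\tfrac12+i\gamma_\chi}\,\Bigr\rvert^2,$$
where the $\theta_{\gamma_\chi}$ are independent and uniform on $\mathbb R/2\pi\Z$, subject to $\theta_{-\gamma}=-\theta_\gamma$ when $\chi$ is real and to $\psi(\cdot,\bar\chi)=\overline{\psi(\cdot,\chi)}$ on the remaining conjugate pairs.

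Next I would compute the first two moments and the large deviations of $X_q$. Independence and $\E e^{i\theta}=0$ give $\E X_q=\sum_{\chi\neq\chi_0}\sum_{\gamma_\chi}(\tfrac14+\gamma_\chi^2)^{-1}$; the inner sum equals $\sum_\rho(\rho(1-\rho))^{-1}$ over the zeros of $L(s,\chi)$, with main term $\log(\mathfrak q_\chi/\pi)$ by the Hadamard product and functional equation ($\mathfrak q_\chi$ the conductor of $\chi$), so that $\E X_q\sim\phi(q)\log q$ after summing over divisors. For the variance, the point is that for a non-real $\chi$ no single zero dominates $\sum_{\gamma_\chi}(\tfrac12+i\gamma_\chi)^{-1}e^{i\theta_{\gamma_\chi}}$ (the lowest zeros lie at height $\asymp 1/\log q$ and $\asymp\log q$ of them are of comparable size), so a central limit theorem makes this sum an asymptotically centred complex Gaussian of variance $\sigma_\chi^2:=\sum_{\gamma_\chi}(\tfrac14+\gamma_\chi^2)^{-1}\ll\log q$, whence $W_\chi$ is asymptotically $\sigma_\chi^2$ times a unit-mean exponential (real characters instead give a scaled $\chi^2_1$, but are too few to matter). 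Combined with $W_{\bar\chi}=W_\chi$, the $\sim\phi(q)/2$ conjugate pairs each contribute mean $\sim 2\log q$ and variance $\sim 4(\log q)^2$, so $\V(X_q)\sim 2\phi(q)(\log q)^2$. Since $X_q$ is then, up to the negligible real-character part, a sum of $\sim\phi(q)/2$ roughly independent terms each distributed like $2\sigma_\chi^2$ times a unit exponential, a Chernoff bound for the upper tail and a change-of-measure plus local-limit estimate for the lower tail should give, for each fixed $\epsilon>0$,
$$\exp\bigl(-C_2(\epsilon)\phi(q)\bigr)\ \le\ \P\bigl(\abs{X_q-\E X_q}>\epsilon\,\E X_q\bigr)\ \le\ \exp\bigl(-C_1(\epsilon)\phi(q)\bigr),$$
with $0<C_1(\epsilon)\le C_2(\epsilon)$ and $C_i(\epsilon)\asymp\epsilon^2$ as $\epsilon\to0$.

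To pass from the model to the conjecture, one treats $\phi(q)V(x;q)/x$ at $x=e^y$ as a sample of $X_q$, so that the failure of $V(x;q)\sim x\log q$ for some admissible $q$ becomes the event $\abs{\phi(q)V(x;q)/x-\E X_q}>\epsilon\,\E X_q$ for some $q\ge(\log\log x)^{1+\delta}$. For a fixed such $q$ the admissible values of $x$ satisfy $y=\log x\le\exp(q^{1/(1+\delta)})$, so the expected measure of the set of $y$ in this range at which $q$ is bad is $\exp(q^{1/(1+\delta)})\,\P(\abs{X_q-\E X_q}>\epsilon\,\E X_q)\ll\exp(q^{1/(1+\delta)}-C_1(\epsilon)\phi(q))$; since $\phi(q)\gg q/\log\log q$ swamps $q^{1/(1+\delta)}$, summing over $q$ gives a convergent series tending to $0$, and a first-moment / Borel--Cantelli argument yields the conjecture (for the large values $x^{o(1)}\le q\le x$, where the distributional limit is unavailable, the prediction rests instead on \eqref{equation montgomery theorem}, \eqref{equation hooley}, \eqref{equation Goldston Vaughan} and \eqref{equation conjecture FG}). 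If instead $q$ is taken of size $(\log\log x)^{1-\delta}$ itself, the per-height bad probability is $\asymp\exp(-\Theta((\log\log x)^{1-\delta}))$, and because $(\log\log x)^{1-\delta}=o(\log\log x)$ the bad set of $y=\log x$ has infinite expected measure; a second-moment argument over $y$-intervals separated by $\gg\log q$ then shows this set is unbounded, so $V(x;q)\sim x\log q$ fails for this choice of $q$. The transition occurs where $\phi(q)\asymp\log\log x$, that is for $q$ of size $\log\log x$ up to lower-order factors, which is what makes the exponent $1\pm\delta$ the dividing line.

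The main obstacle will be the two-sided large-deviation estimate. It has to hold with matching exponential order $e^{-\Theta_\epsilon(\phi(q))}$ \emph{uniformly} over the relevant range of $q$, even though the summands $W_\chi$ are neither independent (conjugate pairs coincide), nor identically distributed (the conductors of $\chi$ range over the divisors of $q$), nor exactly exponential; one also has to check that truncating the explicit formula and discarding the $O((\log qx)^2)$ term does not disturb fluctuations of size $\asymp\sqrt{\phi(q)}\,\log q$, and pin the rate function down sharply enough (in particular its behaviour as $\epsilon\to0$) to be sure that it is $\phi(q)$, and not a smaller power of $q$, that controls the threshold.
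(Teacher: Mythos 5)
Your sketch follows essentially the same route as the paper: under GRH and LI you pass from $\phi(q)V(x;q)/x=\phi(q)^{-1}\sum_{\chi\neq\chi_0}|\psi(x,\chi)|^2\cdot\phi(q)/x$ to the same random model built from uniform phases on the zeros, compute mean $\sim\phi(q)\log q$ and variance $\sim 2\phi(q)(\log q)^2$, establish two-sided large deviations of exponential order $e^{-\Theta(\epsilon^2\phi(q))}$, and then compare $\exp(c\epsilon^2\phi(q))$ with the length of the $y$-range to locate the threshold $\phi(q)\asymp\log\log x$, exactly as in Theorems \ref{theorem first two moments} and \ref{theorem large deviations} and the concluding remarks (including the caveat that uniformity in $q$, handled in the paper via the moment-transfer statement \eqref{equation mth moment conjecture} and the pair-correlation conjecture, is the real obstruction). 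Your per-character Gaussian/exponential approximation and the change-of-measure lower bound are minor technical variants of the paper's exact Bessel moment-generating-function computation and Paley--Zygmund argument, but the overall justification is the same.
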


To justify this conjecture, we will show in Proposition \ref{proposition H_q as sum of Y} that under GRH and a Linear Independence Hypothesis, the limiting logarithmic distribution of $\phi(q)V(x;q)/x$ coincides with the distribution of an explicit random variable $H_q$ (see Definition \ref{definition H_q}).

\subsection{Analysis of the random variable $H_q$}
We analyze the random variable $H_q$ defined in \eqref{equation definition of H}, 
 by first computing its mean and variance.
\begin{theorem}
Assuming GRH,
the random variable $H_q$ defined in \eqref{equation definition of H} satisfies
$$ \E[H_q]= \phi(q) \log q\bigg(1+O\left(\frac{\log\log q}{\log q}\right) \bigg), \hspace{.4cm} \V[H_q] = 2\phi(q) (\log q)^2\left(1 +O\left( \frac {1}{\log\log q}\right)\right).$$
Assuming moreover that $L(\frac 12,\chi)\neq 0$ (this is Chowla's Conjecture), we have that
\begin{equation}
\E[H_q]=\phi(q)\bigg( \log q -\gamma -\log (2\pi) -\sum_{p\mid q} \frac{\log p}{p-1}+O\left(\frac{(\log q)^2}{q}\right) \bigg).
\label{equation precise estimate mean}
\end{equation} 
\label{theorem first two moments}
\end{theorem}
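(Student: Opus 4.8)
The plan is to work with the explicit form of $H_q$ given in \eqref{equation definition of H}, which (by the usual explicit-formula heuristic underlying Proposition \ref{proposition H_q as sum of Y}) should express $H_q$ as a sum over non-principal characters $\chi \bmod q$ of a quantity built from the nontrivial zeros of $L(s,\chi)$ together with independent random phases; schematically, $H_q = \sum_{\chi \neq \chi_0} \bigl| \sum_{\gamma_\chi} \frac{X_\gamma}{\frac14 + \gamma_\chi^2} \bigr|^2$ or a close variant, where the $X_\gamma$ are independent uniform-on-the-circle random variables indexed by the zeros $\tfrac12 + i\gamma_\chi$ on the critical line (GRH placing all zeros there). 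First I would compute $\E[H_q]$: by independence the cross terms in $|\cdot|^2$ vanish and one is left with a double sum over zeros of a single $L(s,\chi)$ weighted by $(\tfrac14+\gamma^2)^{-1}$, which is essentially $\sum_\chi \sum_{\gamma_\chi} (\tfrac14+\gamma_\chi^2)^{-1}$. This zero-sum is classical: via the Hadamard factorization / logarithmic-derivative identity it equals $\log(q/\pi) - \operatorname{Re}\frac{\Gamma'}{\Gamma}(\tfrac14 + \tfrac{a_\chi}{2}) - 2\operatorname{Re}\frac{L'}{L}(\tfrac12,\chi) + (\text{elementary}),$ so summing over all $\chi \neq \chi_0$ and using $\sum_{\chi \neq \chi_0} 1 = \phi(q) - 1$ gives the main term $\phi(q)\log q$, with the stated $O(\phi(q)\log\log q)$ error coming from the contribution of the $L'/L(\tfrac12,\chi)$ terms, bounded on GRH by $O(\log\log q)$ each via the standard Montgomery-type bound for $|L'/L(\tfrac12,\chi)|$ near a possible zero.

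For the refined mean \eqref{equation precise estimate mean}, the point is that the only obstruction to the asymptotic $\phi(q)\log q$ becoming a genuine equality with the Friedlander–Goldston constant $-\gamma - \log(2\pi) - \sum_{p\mid q}\frac{\log p}{p-1}$ is precisely the term coming from zeros at $s = \tfrac12$: if $L(\tfrac12,\chi) \neq 0$ for all $\chi \bmod q$ (Chowla), the factor $2\operatorname{Re}\frac{L'}{L}(\tfrac12,\chi)$ disappears from the relevant identity, and $\sum_{\chi\neq\chi_0}\frac{L'}{L}(\tfrac12,\chi)$ can be turned by contour-shifting (or directly expanding into the Dirichlet series for $\frac{L'}{L}$ and summing the orthogonality relation over $\chi$) into a sum over prime powers $\equiv 1 \bmod q$, whose main contribution is the Euler-factor correction $-\sum_{p\mid q}\frac{\log p}{p-1}$ plus the constants $-\gamma - \log(2\pi)$ arising from the archimedean ($\Gamma'/\Gamma$) factors and the $\zeta$-side, the tail being $O((\log q)^2/q)$. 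I would mirror the Friedlander–Goldston/Goldston–Vaughan computation of \eqref{equation hooley} and \eqref{equation conjecture FG} here, since the arithmetic of that constant is identical; the novelty is only that it now appears as an exact mean rather than an asymptotic average.

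For the variance, I would compute $\E[H_q^2]$. Expanding, $H_q^2$ is a sum over pairs $(\chi_1,\chi_2)$ of non-principal characters of a fourfold sum over zeros with four independent phase factors; taking expectations, only those terms survive in which the phases pair off, which (by linear independence of the $\gamma$'s — this is where the Linear Independence Hypothesis enters, to rule out coincidences among zeros of different $L$-functions and within one) forces $\chi_1 = \chi_2$ (or $\chi_1 = \overline{\chi_2}$, giving the same contribution by the functional equation) and a matching of zeros in conjugate pairs. The diagonal $\chi_1 = \chi_2$ contributes, after the phase averaging gives a factor $2$ from the two ways to pair four equal indices versus the disallowed triple-coincidences, a term of the form $2\sum_{\chi\neq\chi_0} \bigl(\sum_{\gamma_\chi}(\tfrac14+\gamma_\chi^2)^{-1}\bigr)^2$, and subtracting $(\E[H_q])^2 = \bigl(\sum_\chi \sum_\gamma (\tfrac14+\gamma^2)^{-1}\bigr)^2$ leaves, up to cross terms that are lower order, $\V[H_q] \sim 2\sum_{\chi\neq\chi_0}\bigl(\log q\bigr)^2 = 2\phi(q)(\log q)^2$, again using the zero-sum evaluation above for each $\chi$. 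The error term $O(1/\log\log q)$ tracks the discrepancy between $\sum_\gamma(\tfrac14+\gamma^2)^{-1}$ and $\log q$, squared, plus the off-diagonal cross terms. The main obstacle is the bookkeeping of the phase-averaging in $\E[H_q^2]$: one must carefully enumerate which quadruples of zeros give a nonzero expectation under the independence/linear-independence assumptions, separate the "genuinely diagonal" contribution from the spurious near-diagonal one, and verify that the off-diagonal ($\chi_1 \neq \chi_2$, $\chi_1 \neq \overline{\chi_2}$) terms contribute $o(\phi(q)(\log q)^2)$ — this requires uniform control, on GRH, of $\sum_{\gamma_\chi}(\tfrac14+\gamma_\chi^2)^{-1}$ over all $\chi \bmod q$ simultaneously, which the zero-sum identity supplies.
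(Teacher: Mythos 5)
Your overall strategy (compute the first two moments of $H_q$ directly, evaluate $\sum_{\gamma_\chi}(\tfrac14+\gamma_\chi^2)^{-1}$ by a Hadamard-type identity, and get the refined constant by orthogonality plus GRH) is the same general shape as the paper's argument, but several of your key steps are wrong or missing. First, your zero-sum identity is anchored at the wrong point: under GRH the zeros are $\rho=\tfrac12+i\gamma$, so $\Re\frac{1}{\frac12-\rho}=0$ for $\gamma\neq 0$ and the logarithmic-derivative identity at $s=\tfrac12$ simply does not produce $\sum_\gamma(\tfrac14+\gamma_\chi^2)^{-1}$; moreover the bound $L'/L(\tfrac12,\chi)\ll\log\log q$ that you invoke is false on GRH (the quantity can even blow up at a central zero). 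The correct identity, used in Lemma \ref{lemma b(chi)}, evaluates the sum in terms of $L'(1,\chi^*)/L(1,\chi^*)$, and the $O(\log\log q)$ error comes from Littlewood's bound at $s=1$, not at $s=\tfrac12$. Related to this, you misidentify how Chowla's hypothesis enters: in Definition \ref{definition H_q} the variables $Y_\chi$ omit the central zeros, so $\E[|Y_\chi|^2]=\sum_{\gamma_\chi}\frac{m_{\gamma_\chi}}{\frac14+\gamma_\chi^2}-4z_\chi$; Chowla kills the $z_\chi$ terms, while without it one needs the 1-level density bound $\sum_{\chi\neq\chi_0}z_\chi\ll\phi(q)$ as in \eqref{equation central point} (the trivial GRH bound on $z_\chi$ is not small enough for the stated error). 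You also never distinguish modulus from conductor: the constant $-\sum_{p\mid q}\frac{\log p}{p-1}$ comes from the conductor sum $\sum_{\chi\neq\chi_0}\log q^*$ of Lemma \ref{lemma FiMa}, not from an ``Euler-factor correction'' to $L'/L(\tfrac12,\chi)$, and the $O((\log q)^2/q)$ error requires the nontrivial bound of Lemma \ref{lemma analytic term} on $\sum_{\chi\neq\chi_0}L'(1,\chi^*)/L(1,\chi^*)$, proved via switching to primitive characters, orthogonality, and GRH applied to $\psi(t;q,1)$.

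For the variance there are two further problems. The theorem is stated under GRH alone, so you cannot appeal to LI to rule out coincidences of ordinates: in the paper's model the $Z_{\gamma_\chi}$ are independent by construction, multiplicities $m_{\gamma_\chi}$ are allowed, and it is precisely Selberg's bound on $m_{\gamma_\chi}$ (controlling $\sum_\gamma m_{\gamma_\chi}^2(\tfrac14+\gamma_\chi^2)^{-2}$ and $\sum_\chi z_\chi^2$) that produces the $O(1/\log\log q)$ error, not the discrepancy between $\sum_\gamma(\tfrac14+\gamma^2)^{-1}$ and $\log q$. Second, your constant bookkeeping is not consistent: since $H_q=\sum_{\chi\in C(q)}|Y_\chi|^2$ is a sum of independent variables, $\V[H_q]=\sum_{\chi\in C(q)}\V[|Y_\chi|^2]$ with no cross terms at all, and the constant $2$ comes from treating real and complex characters separately (fourth moments $3(\log q^*)^2$ versus $8(\log q^*)^2$, reflecting the real-Gaussian behaviour of $Y_\chi$ for real $\chi$ and the perfect correlation of the $\chi,\overline\chi$ contributions for complex $\chi$). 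As you have written it, the diagonal contribution $2\sum_\chi S_\chi^2$ minus $(\E[H_q])^2$ gives $\sim\phi(q)(\log q)^2$, half of the correct answer; recovering the factor $2$ requires also counting the $\chi_1=\overline{\chi_2}$ pairings and the constant $3$ for real characters, none of which is carried out. Finally, the second moment $\sum_{\chi\neq\chi_0}(\log q^*)^2=\phi(q)(\log q)^2(1+O(\log\log q/\log q))$ of Lemma \ref{lemma FiMa} is also needed to pass from conductors to the stated main term.
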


\begin{remark}
Without assuming Chowla's Conjecture, one can obtain the estimate \eqref{equation precise estimate mean} with an additional term involving real zeros of $L(s,\chi)$ (see \eqref{equation precise estimate mean without Chowla}).
\label{remark without chowla}
\end{remark}


Hence, $H_q$ is a random variable which is concentrated about its mean $\E[H_q]\sim\phi(q)\log q$.
In light of Proposition \ref{proposition H_q as sum of Y}, this gives an intuitive reason why \eqref{equation hooley conjecture} should hold. Our main result is an estimate on the large deviations of $H_q$, which gives information on the range of validity of \eqref{equation hooley conjecture}. 

%

\begin{theorem}
\label{theorem large deviations}
Assume GRH, and let $H_q$ be the random variable defined in \eqref{equation definition of H}. If $q$ is large enough and $ (\log\log q)^2/\log q \leq \epsilon < \epsilon_0$ ($\epsilon$ can depend on $q$), then we have the following bounds on the large deviations of $H_q$:
\begin{equation}
\frac 14 \exp( -c_1 \epsilon^2 \phi(q) )  \leq \P[|H_q-\phi(q)\log q| > \epsilon \phi(q) \log q] \leq 2\exp(- c_2\epsilon^2 \phi(q) ).
\label{equation theorem large devations}
\end{equation} 
Here, $\epsilon_0$, $c_1$ and $c_2$ are absolute constants.
\end{theorem}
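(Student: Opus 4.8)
The plan is to exploit the Linear Independence Hypothesis to write $H_q$ as a sum of \emph{independent} nonnegative random variables, one per pair of conjugate characters, and then to run a Cram\'er-type argument. Concretely, from the definition \eqref{equation definition of H} one has, under GRH and LI, $H_q=\sum_{\chi\neq\chi_0}|\mathcal Z_\chi|^2$ with $\mathcal Z_\chi=-\sum_{\gamma_\chi}X_{\gamma_\chi}/(\tfrac12+i\gamma_\chi)$, the $X_{\gamma_\chi}$ being independent and uniform on the unit circle and indexed by the nontrivial zeros of $L(s,\chi)$; moreover $\mathcal Z_{\bar\chi}=\overline{\mathcal Z_\chi}$, whereas the $\mathcal Z_\chi$ attached to inequivalent characters are built from disjoint families of $X$'s and hence are independent. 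Therefore $H_q=2\sum_j W_j+\sum_i U_i$, where $W_j=|\mathcal Z_{\chi_j}|^2$ runs over representatives of the $\tfrac12\phi(q)+O(q^{o(1)})$ pairs of complex conjugate characters, $U_i$ runs over the $q^{o(1)}$ non-principal real characters (a possible zero at $s=\tfrac12$ only shifts a block by $O(1)$ and is harmless), and all the blocks are independent. Since $\E[W_j]=\E[U_i]=\sum_{\gamma_\chi}|\tfrac12+i\gamma_\chi|^{-2}\ll\log q$, the real-character part adds only $q^{o(1)}$ to the mean and is negligible throughout; likewise the $q^{1/2+o(1)}$ characters of conductor $\le\sqrt q$ may be discarded, leaving $(1-o(1))\tfrac12\phi(q)$ blocks with $\E[W_j]=(\log q)(1+o(1))$ and $\V[W_j]=(\log q)^2(1+o(1))$, and Theorem \ref{theorem first two moments} gives $2\sum_j\E[W_j]+\sum_i\E[U_i]=\E[H_q]=\phi(q)\log q\,(1+O(\log\log q/\log q))$.

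The engine is a uniform Bernstein bound for a single block. Every real-linear projection $\mathrm{Re}(e^{-i\theta}\mathcal Z_\chi)$ is a sum of independent mean-zero variables whose $\gamma$-th term is bounded by $|\tfrac12+i\gamma|^{-1}$, so Hoeffding's lemma yields $\E[\exp(t\,\mathrm{Re}(e^{-i\theta}\mathcal Z_\chi))]\leq\exp(t^2V_\chi/2)$ with $V_\chi:=\sum_{\gamma_\chi}|\tfrac12+i\gamma_\chi|^{-2}\ll\log q$ (the series converging by GRH and zero counting). Feeding this into the standard sub-Gaussian $\Rightarrow$ sub-exponential estimate for $|\mathcal Z_\chi|^2=(\mathrm{Re}\,\mathcal Z_\chi)^2+(\mathrm{Im}\,\mathcal Z_\chi)^2$ gives $\E[W_j^m]\leq C^m m!\,(\log q)^m$, and then, expanding the exponential and using $\log(1+x)\leq x$,
\[
\log\E[e^{\lambda W_j}]\leq\lambda\,\E[W_j]+C'\lambda^2(\log q)^2\qquad(|\lambda|\leq c/\log q),
\]
with the \emph{honest} linear coefficient $\E[W_j]$ — keeping this coefficient exactly is essential, since a bound with a larger constant multiplying $\lambda\log q$ would be useless below. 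Summing over blocks, $\log\E[e^{\lambda H_q}]\leq\lambda\,\E[H_q]+C''\lambda^2\phi(q)(\log q)^2+q^{o(1)}$ for $|\lambda|\leq c/(2\log q)$. A Chernoff bound then gives, with $\lambda=\delta\epsilon/\log q$ for a small absolute constant $\delta$,
\[
\P[H_q>(1+\epsilon)\phi(q)\log q]\leq\exp\!\Big(-\delta\epsilon^2\phi(q)+O\big(\tfrac{\epsilon\log\log q}{\log q}\phi(q)\big)+C''\delta^2\epsilon^2\phi(q)+q^{o(1)}\Big)\leq 2e^{-c_2\epsilon^2\phi(q)},
\]
since $\epsilon\geq(\log\log q)^2/\log q\gg\log\log q/\log q$ absorbs the middle term and $\epsilon^2\phi(q)\gg q^{1-o(1)}$ absorbs $q^{o(1)}$; the restriction $\epsilon<\epsilon_0$ is precisely what keeps $|\lambda|\leq c/\log q$. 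Running the same computation with $\lambda<0$ (here $e^u-1-u\leq u^2/2$ for $u\leq0$ makes the MGF bound elementary, needing only the second moment) controls $\P[H_q<(1-\epsilon)\phi(q)\log q]$ in the same way, and adding the two tails yields the upper bound in \eqref{equation theorem large devations}.

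For the lower bound it suffices to treat one tail, say $\P[H_q\geq t]$ with $t=(1+\epsilon)\phi(q)\log q$, and here I would use exponential tilting. Choose $\lambda^*\asymp\epsilon/\log q$ — admissible once $\epsilon_0$ is small, since $\E_\lambda[H_q]$ increases from $\E[H_q]$ with derivative $\asymp\phi(q)(\log q)^2$ — so that the tilted mean $\E_{\lambda^*}[H_q]$ equals $t+2\sigma^*$, where $(\sigma^*)^2:=\V_{\lambda^*}[H_q]\asymp\phi(q)(\log q)^2$. Under the tilted law, Chebyshev gives $\P_{\lambda^*}[t\leq H_q\leq t+8\sigma^*]\geq\tfrac12$, and undoing the tilt,
\[
\P[H_q\geq t]\geq\tfrac12\exp\!\big(-\lambda^* t-8\lambda^*\sigma^*+\log\E[e^{\lambda^* H_q}]\big).
\]
By the Bernstein estimate above — now needing the sharp two-sided control of $\log\E[e^{\lambda^* H_q}]$ and of $\E_{\lambda^*}[H_q]$, which is exactly where the precise block moments $\E[W_j]=\log q\,(1+o(1))$, $\V[W_j]=(\log q)^2(1+o(1))$ enter — one gets $-\lambda^* t+\log\E[e^{\lambda^* H_q}]=-c_1\epsilon^2\phi(q)(1+o(1))$, while $\lambda^*\sigma^*\asymp\epsilon\sqrt{\phi(q)}=o(\epsilon^2\phi(q))$ because $\epsilon\geq(\log\log q)^2/\log q$ forces $\epsilon\gg\phi(q)^{-1/2}$; the mean/variance relative errors and the real-character terms are negligible as before. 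Hence $\P[H_q\geq t]\geq\tfrac14e^{-c_1\epsilon^2\phi(q)}$ for $q$ large, giving the lower bound in \eqref{equation theorem large devations}.

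I expect this last step to be the main obstacle: making the change-of-measure argument quantitative and uniform over every $\epsilon$ in the range $(\log\log q)^2/\log q\leq\epsilon<\epsilon_0$. One must (i) show the tilted variance has the exact order $\phi(q)(\log q)^2$, which requires the refined two-sided estimates on the first two cumulants of the blocks, essentially the content of Theorem \ref{theorem first two moments}; (ii) fix $\epsilon_0$ so that $\lambda^*$ remains inside the window $|\lambda|\leq c/\log q$ where the Bernstein bound holds; and (iii) check that every error term — the boundary contribution $\epsilon\sqrt{\phi(q)}$, the $O(\log\log q/\log q)$ relative error in $\E[H_q]$, and the $q^{o(1)}$ from real and small-conductor characters — is $o(\epsilon^2\phi(q))$ throughout that range. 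The fact that the blocks are not identically distributed (their conductors vary) is only a bookkeeping issue, handled by discarding the $q^{1/2+o(1)}$ small-conductor characters, whose total contribution to $H_q$ is $o(\phi(q)\log q)$.
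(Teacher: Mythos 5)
Your upper bound is, in essence, the paper's own argument: decompose $H_q$ into the independent blocks $|Y_{\chi}|^2$, bound the moment generating function of each block on a window $|\lambda|\ll 1/\log q$, multiply by independence, and apply Chernoff with $\lambda\asymp\epsilon/\log q$, using Theorem \ref{theorem first two moments} together with the hypothesis $\epsilon\geq(\log\log q)^2/\log q$ to absorb the $O(\phi(q)\log\log q)$ discrepancy between $\E[H_q]$ and $\phi(q)\log q$. The only real difference there is how the per-block moment bounds $\E[|Y_{\chi}|^{2n}]\ll C^n n!\,(\log q^*)^n$ are obtained: you get them from Hoeffding's lemma applied to the linear projections of $Y_{\chi}$, while the paper uses the exact Bessel-product formula of Lemma \ref{lemma moment generating of Y_chi}, the bound $I_0(z)\leq e^{|z|^2/4}$ and Cauchy's formula (Lemma \ref{lemma moments of Y_chi}); your centered MGF bound is exactly Lemma \ref{lemma bound moment generating of H}, so this half is equivalent, and arguably your route to the moments is a touch more elementary. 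The lower bound is where you genuinely diverge: you propose exponential tilting plus Chebyshev under the tilted law, whereas the paper applies the Paley--Zygmund inequality (Lemma \ref{lemma lower bound}) to $e^{t\tilde H_q}$ with $t\asymp\epsilon/\log q$, which reduces everything to a pointwise lower bound $\E[e^{tW_{\chi}}]\geq e^{ct^2(\log q^*)^2}$ per block --- extracted from the second and fourth block moments computed in the proof of Theorem \ref{theorem first two moments} --- combined with the already-proved upper bound on $\mathcal L_{\tilde H_q}(2t)$. Both routes need the same two-sided per-block information, but Paley--Zygmund is lighter: it only asks for the value of the cumulant generating function at $t$ and $2t$, while your tilting argument additionally needs its first two derivatives uniformly on the tilt window (existence and size of $\lambda^*$, and the lower bound $\V_{\lambda^*}[H_q]\gg\phi(q)(\log q)^2$), which is precisely your step (i); it is not carried out, but it does follow from the same machinery (third-cumulant control $\ll(\log q)^3$ on the window plus per-block variance lower bounds for the characters of conductor exceeding $\sqrt q$), so I would call it extra bookkeeping rather than a gap. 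Your accounting of the error terms (the $O(\epsilon\phi(q)\log\log q/\log q)$ mean shift, the boundary term $\lambda^*\sigma^*\asymp\epsilon\sqrt{\phi(q)}$, and the $q^{o(1)}$ contribution of real and small-conductor characters) matches what the paper does and is valid throughout the stated range of $\epsilon$.
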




\begin{remark}
\label{remark extended range epsilon}
If in addition to GRH we assume that $L(\frac 12,\chi) \neq 0$ for all primitive $\chi$ and we replace $H_q-\phi(q)\log q$ with $H_q-\phi(q)\big( \log q -\gamma -\log (2\pi) -\sum_{p\mid q} \frac{\log p}{p-1}\big)$ in \eqref{equation theorem large devations}, then we can extend the range of $\epsilon$ to 
$\Psi(q) \log q/q  \leq \epsilon \leq \epsilon_0,$
where $\Psi(q)$ is any function tending to infinity with $q$ and $\epsilon_0$ is an absolute constant. 

\end{remark}

\begin{remark}
What Theorem \ref{theorem large deviations} roughly says is that the large deviations of $H_q$ are those of a normal distribution of mean $\E[H_q] \approx \phi(q)\log q$ and variance $\V[H_q] \approx 2\phi(q)(\log q)^2$ (see Theorem \ref{theorem first two moments}). Indeed, such a distribution $Z_q\sim N(\phi(q) \log q, 2\phi(q)(\log q)^2)$ satisfies
\begin{align*}
\P[|Z_q-\phi(q)\log q|> \epsilon\phi(q)\log q ] &=\P\bigg[ \frac{|Z_q-\E[Z_q]|}{\sqrt{\V[Z_q]}}> \epsilon \sqrt{\phi(q)/2}\bigg]  \\
& = \frac{2}{\sqrt{2\pi}}\int_{\epsilon \sqrt{\phi(q)/2}}^{\infty}  e^{-t^2/2}dt \sim \frac 2{\epsilon \sqrt{\pi \phi(q)}} \exp(-\epsilon^2 \phi(q)/4).
\end{align*} 
\end{remark}

\subsection{Relation between $V(x;q)$ and $H_q$}

Assuming the following linear independence hypothesis and GRH, we will show that the distribution of $H_q$ coincides with the limiting logarithmic distribution of $\phi(q)V(x;q)/x$.

\begin{hypothesis}[Linear Independence]
For any $q\geq 1$, the multiset $Z(q):=\{ \Im(\rho) \geq 0 : L(\rho,\chi)=0,\chi\bmod q, \Re(\rho)\geq \frac 12\}$, that is the set of all non-negative imaginary parts of zeros of $L(s,\chi)$ with $\chi\bmod q$ and $\Re(s)\geq \frac 12$, is linearly independent over $\mathbb Q$.
\end{hypothesis}

Hypothesis LI first appeared for $\zeta(s)$ in the work of Wintner \cite{W} on the distribution of $\psi(x)-x$, and has subsequently been used by many authors for similar purposes \cite{Ho7, Mo, Mn}. It is now a standard hypothesis in the study of prime number races \cite{RubSar, FeM, Ma, Ng1, FiMa, Lam3,Lam,Lam2, Fi1, Fi}. Partial results towards LI include the work of Martin and Ng, and Li and Radziwill in the case of Dirichlet $L$-functions \cite{MN,LR} and the work of Kowalski in the context of hyperelliptic curves over finite fields \cite{Ko}.


\begin{remark}
Hypothesis LI implies that all nontrivial zeros of $L(s,\chi)$ are simple and do not lie on the real line.
\end{remark}

\begin{proposition}
Assume GRH and LI. Then as $x\rightarrow \infty$, the limiting logarithmic distribution of $\phi(q)V(x;q)/x$ coincides with that of the random variable $H_q$.
\label{proposition H_q as sum of Y}
\end{proposition}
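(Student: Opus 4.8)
The plan is to express $\psi(x;q,a) - \psi(x,\chi_0)/\phi(q)$ in terms of the zeros of the Dirichlet $L$-functions modulo $q$, via the explicit formula, and then to translate the variance $V(x;q)$ into a function of the vector of $x^{i\gamma}$ (where the $\gamma$ run over the imaginary parts of zeros). Concretely, orthogonality of characters gives
$$
\psi(x;q,a) - \frac{\psi(x,\chi_0)}{\phi(q)} = \frac{1}{\phi(q)} \sum_{\chi \neq \chi_0} \bar\chi(a)\, \psi(x,\chi),
$$
and by Parseval on the finite abelian group $(\Z/q\Z)^\times$ one gets
$$
V(x;q) = \frac{1}{\phi(q)} \sum_{\chi \neq \chi_0} |\psi(x,\chi)|^2.
$$
Setting $y = \log x$, the explicit formula under GRH writes $\psi(e^y,\chi)$ (suitably normalized, i.e.\ divided by $e^{y/2}$) as essentially $-\sum_{\gamma_\chi} \frac{e^{iy\gamma_\chi}}{\frac12 + i\gamma_\chi}$ plus lower-order terms coming from the trivial zeros, the possible zero at $s=0$, and the pole contributions; the key point is that each such normalized sum is an almost periodic function of $y$ whose frequencies are exactly the elements of $Z(q)$.

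Next I would invoke the standard machinery of limiting distributions (as developed by Rubinstein–Sarnak and used throughout the prime race literature cited in the excerpt). Since $\phi(q) V(x;q)/x = \phi(q) e^{-y} V(e^y;q)$ is, up to a negligible error, a fixed continuous function $F$ of the point $(e^{iy\gamma})_{\gamma \in Z(q)}$ on the infinite-dimensional torus, the logarithmic average $\frac{1}{\log X}\int_2^X (\,\cdot\,) \frac{dx}{x}$ converges: the Kronecker–Weyl equidistribution theorem applies precisely because Hypothesis LI guarantees that the $\gamma \in Z(q)$ are $\Q$-linearly independent, so the orbit $\{(e^{iy\gamma})_\gamma : y \in \R\}$ equidistributes with respect to Haar measure on the closure torus $\mathbb T^{Z(q)}$. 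Hence the limiting logarithmic distribution of $\phi(q)V(x;q)/x$ exists and equals the distribution of $F$ evaluated at a Haar-random point, which is exactly how $H_q$ is defined in \eqref{equation definition of H}: as the value at a uniform random point of the torus of the same combination of squared $L$-function sums. One must check that the contributions of the trivial zeros and the possible zero at $s=0$ and $s=1/2$ match up with whatever constant and $L(\tfrac12,\chi)$-dependent terms appear in the definition of $H_q$; GRH is what ensures the $\gamma$ are real so that $e^{iy\gamma}$ genuinely lies on the unit circle and the whole approach makes sense.

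There are two technical points requiring care. First, the sum over zeros $\sum_\gamma \frac{e^{iy\gamma}}{\frac12+i\gamma}$ is only conditionally convergent, so the passage from $\psi(e^y,\chi)e^{-y/2}$ to the almost periodic function, and then the squaring and summing over $\chi$, must be justified by truncating the sum over zeros at height $T$, controlling the tail uniformly (the tail has small $L^2$-norm in $y$ by the standard zero-density/mean-square estimates under GRH), and only then applying Kronecker–Weyl to the finite-dimensional projection before letting $T\to\infty$; this is the routine but somewhat delicate ``approximate the random variable by a nice function, then take limits'' argument. Second, one has to confirm that the random variable obtained this way is literally the $H_q$ of Definition \ref{definition H_q} — i.e.\ that the author's definition of $H_q$ was set up as the Haar-measure-pushforward of exactly this functional — which should be essentially a matter of matching definitions rather than a genuine difficulty.

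The main obstacle is the uniformity in the truncation argument: one needs that the error between $V(e^y;q)$ and its truncated approximation is small not just pointwise but in an $L^1$ (or $L^2$) sense with respect to the logarithmic measure $\frac{dy}{}$ on $[0,Y]$, uniformly as $Y\to\infty$, so that convergence in distribution of the truncations upgrades to convergence in distribution of the full quantity. This requires a mean-square bound of the shape $\frac1Y\int_0^Y \big| \sum_{|\gamma|>T} \frac{e^{iy\gamma}}{\frac12+i\gamma}\big|^2 dy \ll \sum_{|\gamma|>T} \frac{1}{\frac14+\gamma^2} \to 0$ as $T\to\infty$, which under GRH follows from $\sum_\chi \sum_{\gamma_\chi} (\tfrac14+\gamma_\chi^2)^{-1} \ll \phi(q)\log q$ together with $\sum_{|\gamma|>T}(\tfrac14+\gamma^2)^{-1}\to 0$; assembling this cleanly across all $\phi(q)-1$ characters simultaneously, and handling the cross terms $\psi(e^y,\chi)\overline{\psi(e^y,\chi')}$ correctly (they do contribute, since $V(x;q)=\frac1{\phi(q)}\sum_\chi|\psi(x,\chi)|^2$ has no such cross terms after Parseval — this is actually a simplification), is where the real work lies.
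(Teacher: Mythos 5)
Your proposal follows essentially the same route as the paper: orthogonality/Parseval to get $V(x;q)=\frac1{\phi(q)}\sum_{\chi\neq\chi_0}|\psi(x,\chi)|^2$, the explicit formula under GRH to write $\phi(q)e^{-y}V(e^y;q)$ as an almost periodic function of $y$ with frequencies the $\gamma_\chi$, and then Kronecker--Weyl equidistribution under LI together with the standard Rubinstein--Sarnak truncation/mean-square argument (which the paper likewise delegates to \cite{RubSar} and \cite{FiMa}) to identify the limiting logarithmic distribution with that of $H_q$. The only cosmetic difference is that the paper carries out explicitly the bookkeeping you defer to ``matching definitions'' --- pairing $\chi$ with $\overline{\chi}$ and using that LI forbids real zeros, so the real-character terms become $4(\sum_{\gamma_\chi>0}\Re\cdots)^2$ and the complex ones $2|\sum_{\gamma_\chi\neq0}\cdots|^2$, matching the factors in \eqref{equation definition of Y_chi} --- but this is the same argument, so your proof is correct.
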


Going back to Theorem \ref{theorem large deviations}, we see that under GRH and LI, the probability that $V(e^y;q) \not\sim e^y\log q $, that is the probability that for a fixed $0 < \epsilon \leq \epsilon_0$ we have $ |H_q-\phi(q)\log q| > \epsilon \phi(q) \log q,  $ is at most $2\exp(-c_2 \epsilon^2 \phi(q))$. Hence we expect that this event does not happen at all in the range $y =o( \exp( c_2\epsilon^2 \phi(q)))$; this translates to the statement that $V(x;q)\sim x\log q$ in the range $(\log\log x)^{1+\delta} < q \leq x^{o(1)}$, justifying Conjecture \ref{conjecture Hooley in large range}. We will expand this argument in the concluding remarks.

\begin{remark}
It is interesting to note that the secondary term $-\gamma-\log (2\pi) - \sum_{p\mid q} \frac{\log p}{p-1}$ appearing in Theorem \ref{theorem first two moments} is identical to the secondary term appearing in \eqref{equation conjecture FG}.
Indeed, these terms come from quite different sources. The one appearing in the current paper comes from the following GRH estimate, with the $\gamma_{\chi}$ running (with multiplicity) through the imaginary parts of the non-trivial zeros of $L(s,\chi)$:
\begin{equation}
\sum_{\substack{\chi\bmod q \\ \chi\neq \chi_0}} \sum_{\gamma_{\chi}} \frac 1{\frac 14+\gamma^2} = \phi(q)\bigg( \log q -\gamma -\log (2\pi) -\sum_{p\mid q} \frac{\log p}{p-1}+O\left(\frac{(\log q)^2}{q}\right) \bigg), 
\end{equation} 
whereas the one in \cite{FG} comes from their Proposition 3, which is an estimate for the average of the Hardy-Littlewood singular series for prime pairs defined in \eqref{equation singular series}.
Their estimate takes the form
\begin{equation}
\sum_{j\leq y} \mathfrak S(jq)\left(1-\frac{j}{y} \right) = \frac{y}{2} \frac q{\phi(q)} -\frac{\log y}2 -\frac12 \bigg( \gamma + \log(2\pi) -1+\sum_{p\mid q} \frac{\log p}{p-1}\bigg) +J_{\delta}(y,q),  
\label{equation average singular series}
\end{equation} 
where $J_{\delta}(y,q)$ is an error term. Actually, this is evidence for Chowla's Conjecture, which states that $L(\frac 12,\chi)\neq 0$. Indeed, going back to Remark \ref{remark without chowla}, Theorem \ref{theorem first two moments} gives an estimate for $\E[H_q]$ in which a term depending on the real zeros of $L(s,\chi)$ appears:
\begin{equation}
\E[H_q]=\phi(q)\bigg( \log q -\gamma -\log (2\pi) -\sum_{p\mid q} \frac{\log p}{p-1}-4\sum_{\chi\neq \chi_0} z_{\chi}+O\left(\frac{(\log q)^2}{q}\right) \bigg),
\label{equation precise estimate mean without Chowla}
\end{equation} 
where $z_{\chi}$ is the order of vanishing of $L(s,\chi)$ at $s=\frac 12$. Comparing this with \eqref{equation conjecture FG}, one sees that the extra term involving $z_{\chi}$ should be of very small order, giving evidence for Chowla's Conjecture.

\end{remark}





\subsection{Method of proof and possible extensions}
The proof of Theorem \ref{theorem large deviations} is based on the analytic properties of the moment-generating function (Laplace transform) of $H_q$. Previous estimates on large deviations of error terms of prime counting functions \cite{Mo, Mn, MoOd, RubSar, Ng1, Ng, Lam} are based on an explicit formula for the moment-generating function of the associated random variable which involves infinite products of Bessel functions. For example, under GRH and LI, the moment-generating function of the distribution of the error term $e^{-y/2}(Li(e^y)-\pi(e^y))$ is given by 
$$ \mathcal L(z)=e^{z} \prod_{\gamma>0} I_0\bigg( \frac {2z}{\sqrt{\frac 14+\gamma^2}}\bigg), $$
where $I_0$ is the modified Bessel function and $\gamma$ runs through the imaginary parts of the nontrivial zeros of $\zeta(s)$.
We could not use this approach in our analysis since such a nice closed form is not known for the moment-generating function of $H_q$. Instead we use that 
$$ H_q = \sum_{\substack{\chi \in C(q) }} |Y_{\chi}|^2, $$
where $C(q)$ is a certain set of characters $\bmod q$ and the $Y_{\chi}$ are independent random variables whose real and imaginary parts have moment-generating functions which are known explicitly (see Lemma \ref{lemma moment generating of Y_chi}). From this we compute the moments of $H_q$ in terms of the moments of $|Y_{\chi}|^2$, which we then bound using complex analysis. The moment-generating functions of $\Re(Y_{\chi})$ and $\Im(Y_{\chi})$ are entire; this is a consequence of the fact that the $n$-th moments of $\Re(Y_{\chi})$ and $\Im(Y_{\chi})$ are bounded above by $n!^{\frac 12+O_q(1/\log n)}$. From this we obtain a bound for the $n$-th moment of $|Y_{\chi}|^2$ of order $n!^{1+O_q(1/\log n)}$, which we believe is best possible (such is the case with the Gaussian). Hence we deduce the existence of the moment-generating function of $H_q$ inside the circle $|z|=(c\log q)^{-1}$. 
Using this information we give bounds on large deviations of $H_q$ by using a method similar to that used to prove the Bernstein Inequalities. 

\begin{remark}
It is possible to say something about the limiting logarithmic distribution of $\phi(q)V(x;q)/x$ without the assumption LI. Indeed, one can adapt the techniques used in Section 2 of \cite{Fi} to show that this distribution has mean
$$\phi(q)\bigg( \log q -\gamma -\log (2\pi) -\sum_{p\mid q} \frac{\log p}{p-1}+O\left(\frac{(\log q)^2}{q}\right) \bigg),$$
 under GRH and the assumption that the nontrivial zeros of
$$Z_q(s) := \prod_{\substack{\chi \bmod q \\ \chi\neq \chi_0 }} L(s,\chi) $$
are simple and nonreal. This is based on the fact that the calculation of the mean in the proof of Theorem \ref{theorem first two moments} depends only on the fact that the $Z_{\gamma_{\chi}}$ have zero covariance. 
\end{remark}

\begin{remark}
One could ask if the methods of the current paper generalize to the study of the $m$-th moment
$$ H^{(m)}(x;q):=\sum_{\substack{ a \bmod q \\ (a,q)=1}} \left( \psi(x;q,a)-\frac{\psi(x,\chi_0)}{\phi(q)} \right)^m. $$
The first problem of our approach to this problem for $m\geq 3$ is the lack of a nice formula such as \eqref{equation H n terms of characters}, and hence it seems hard to turn this question into a question about sums of independent random variables. Another serious problem is that the method of moments would fail for $m= 3$. Indeed the formula analogous to \eqref{equation H n terms of characters} for $m=3$ contains terms of the form $\psi(x,\chi)^3$ with cubic characters $\chi$, and the moments of these terms grow too fast for the moment method to be applied. This is analogous to the fact that if $Z$ is a standard Gaussian, then the distribution of $Z^3$ is indeterminate. Indeed, Berg \cite{Be} has given an explicit infinite family of distinct random variables whose moments coincide with those of $Z^3$. Specifically, if $|r|\leq \frac 12$ is any real number, then the random variable whose probability density is
$$ f(t) = \frac 1{3\sqrt{\pi}} |t|^{-\frac 23} e^{-|t|^{\frac 23}}  \left( 1+r\cos(\sqrt 3 |t|^{\frac 23}) -\sqrt 3 \sin(\sqrt 3 |t|^{\frac 23}) \right) $$
has exactly the same moments as $Z^3$.
\end{remark}

\section{Link with random variables}

We now relate the study of $V(x;q)$ to that of the random variable $H_q$.
As we will use orthogonality relations, it will be useful to treat real and complex characters seperately. Throughout the paper, $C(q)$ will denote a fixed subset of the Dirichlet characters modulo $q$ such that $C(q)$ contains each non-principal real character, and contains exactly one of $\chi$ or $\overline{\chi}$ for complex characters $\chi$. 

\begin{definition}
\label{definition H_q}
We define the random variables $Z_{\chi;\gamma_{\chi}}=Z_{\gamma_{\chi}}$ to be i.i.d. random variables uniformly distributed on the unit circle in $\mathbb C$, where $\chi$ runs over the set $C(q)$ and $\gamma_{\chi}$ runs over the imaginary parts of the nontrivial zeros of $L(s,\chi)$ (in the case $\chi$ is real, only positive imaginary parts will be considered). We also define
\begin{equation}
H_q:= \sum_{\chi \in C(q)} |Y_{\chi}|^2,
\label{equation definition of H}
\end{equation} 
where
\begin{equation}
Y_{\chi}:= 
\begin{cases}
2\sum_{\gamma_{\chi}>0}  \sqrt{\frac{m_{{\gamma_{\chi}}}}{\frac 14+\gamma_{\chi}^2}} \Re (Z_{\gamma_{\chi}}) & \text{ if } \chi \text{ is real,} \\
\sqrt 2\sum_{\gamma_{\chi}\neq 0} \sqrt{\frac{m_{{\gamma_{\chi}}}}{\frac 14+\gamma_{\chi}^2}} Z_{\gamma_{\chi}}& \text{ if } \chi \text{ is complex.}
\end{cases}
\label{equation definition of Y_chi}
\end{equation} 
Here $m_{\gamma_{\chi}}$ denotes the multiplicity of $\rho_{\chi}=\frac 12+i\gamma_{\chi}$, and the sums over zeros are counted without multiplicity.
\end{definition}

\begin{remark}
It might be preferable to use the notation $Z_{\chi;\gamma_{\chi}}$ rather than $Z_{\gamma_{\chi}}$, since in the way we define these random variables we stipulate that if $\chi\neq \chi'$, then $Z_{\chi,\gamma_{\chi}}$ and $Z_{\chi',\gamma_{\chi'}}$ are independent, whatever $\gamma_{\chi}$ and $\gamma_{\chi'}$ are. We  will keep the notation $Z_{\gamma_{\chi}}$ to be more concise. Note also that LI implies that $m_{\gamma_{\chi}}=1$.
\end{remark}

\begin{remark}
The collection $\{|Y_{\chi}|^2:\chi \in C(q)\}$ is independent. This fact will be useful when computing the moments of $H_q$.
\end{remark}

We now relate $V(x;q)$ and the random variable $H_q$.

\begin{proof}[Proof of Proposition \ref{proposition H_q as sum of Y}]

Using orthogonality relations, we compute
\begin{align}
\begin{split}
V(x;q) & = \sum_{\substack{a \bmod q \\ (a,q)=1}} \bigg| \frac 1{\phi(q)}\sum_{\chi \neq \chi_0} \overline{\chi}(a)\psi(x,\chi) \bigg|^2\\
&= \frac 1{\phi(q)^2}\sum_{\substack{\chi_1,\chi_2\bmod q \\ \chi_1,\chi_2\neq \chi_0}} \psi(x,\chi_1)\psi(x,\overline{\chi_2})\sum_{\substack{ a \bmod q \\ (a,q)=1}}\overline{\chi_1}(a)\chi_2(a)\\
&= \frac 1{\phi(q)} \sum_{\substack{\chi\bmod q \\ \chi \neq \chi_0}} \left|  \psi(x,\chi)  \right|^2.
\end{split}
\label{equation H n terms of characters}
\end{align}
Applying GRH to the explicit formula, we obtain that
$$ V(x;q) = \frac x{\phi(q)} \sum_{\substack{\chi\bmod q \\ \chi \neq \chi_0}} \bigg|  \sum_{\gamma_{\chi}} \frac{e^{i\gamma_{\chi} \log x}}{\frac 12+i\gamma_{\chi}}  \bigg|^2+O(\sqrt x (\log x)^3), $$
so 
\begin{equation}
 \phi(q)e^{-y}V(e^y;q) = \sum_{\substack{\chi\bmod q \\ \chi \neq \chi_0}} \bigg|  \sum_{\gamma_{\chi}} \frac{e^{i\gamma_{\chi} y}}{\frac 12+i\gamma_{\chi}}  \bigg|^2+O(\phi(q)e^{-\frac y2}y^3).
 \label{equation expression for V(x;q)}
\end{equation} 
Using the fact that $\overline{L(s,\chi)} = L(\overline s,\overline{\chi})$ and that real nontrivial zeros do not exist under LI, we transform the sum over zeros as follows:
\begin{align*}
\sum_{\substack{\chi\bmod q \\ \chi \neq \chi_0}} \bigg|&  \sum_{\gamma_{\chi}} \frac{e^{i\gamma_{\chi} y}}{\frac 12+i\gamma_{\chi}}  \bigg|^2 = \sum_{\substack{\chi \neq \chi_0 \\ \chi\text{ real}}} \bigg|  \sum_{\gamma_{\chi}>0} \bigg(\frac{e^{i\gamma_{\chi} y}}{\frac 12+i\gamma_{\chi}}  +\frac{e^{-i\gamma_{\chi} y}}{\frac 12-i\gamma_{\chi}} \bigg)\bigg|^2 + \sum_{\substack{\chi \in C(q) \\ \chi\text{ complex}}} \bigg|  \sum_{\gamma_{\chi} \neq 0} \frac{e^{i\gamma_{\chi} y}}{\frac 12+i\gamma_{\chi}} \bigg|^2\\ &\hspace{3cm}+\sum_{\substack{\chi \in C(q) \\ \chi\text{ complex}}} \bigg|  \sum_{\gamma_{\overline{\chi}}\neq 0} \frac{e^{i\gamma_{\overline{\chi}} y}}{\frac 12+i\gamma_{\overline{\chi}}} \bigg|^2\\
&=  \sum_{\substack{\chi \bmod q \\ \chi \neq \chi_0 \\ \chi\text{ real}}} \bigg|  2\Re \sum_{\gamma_{\chi}>0} \frac{e^{i\gamma_{\chi} y}}{\frac 12+i\gamma_{\chi}}  \bigg|^2+\sum_{\substack{\chi \in C(q) \\ \chi\text{ complex}}} \bigg|  \sum_{\gamma_{\chi}\neq 0} \frac{e^{i\gamma_{\chi} y}}{\frac 12+i\gamma_{\chi}} \bigg|^2 +\sum_{\substack{\chi \in C(q) \\ \chi\text{ complex}}} \bigg|  \sum_{\gamma_{\chi}\neq 0} \frac{e^{-i\gamma_{\chi} y}}{\frac 12-i\gamma_{\chi}} \bigg|^2 \\
&= 4\sum_{\substack{\chi \bmod q \\ \chi \neq \chi_0 \\ \chi\text{ real}}} \bigg( \sum_{\gamma_{\chi}>0} \Re \frac{e^{i\gamma_{\chi} y}}{\frac 12+i\gamma_{\chi}}  \bigg)^2+ 2 \sum_{\substack{\chi \in C(q) \\ \chi\text{ complex}}} \bigg|  \sum_{\gamma_{\chi}\neq 0} \frac{e^{i\gamma_{\chi} y}}{\frac 12+i\gamma_{\chi}} \bigg|^2.
\end{align*}
%
Now, $|\frac 12+i\gamma_{\chi}|=\sqrt{\frac 14+\gamma_{\chi}^2}$, and under LI\footnote{Note that if $\chi$ is complex, then LI implies that the set of imaginary parts of nontrivial zeros of $L(s,\chi)$ is linearly independent. This follows from the fact that the zeros of positive imaginary part of both $L(s,\chi)$ and $L(s,\overline{\chi})$ appear in the set $Z(q)$ in the statement of LI.}, if we order the $\gamma_{\chi}$ appearing in the above sums by size, then for every fixed $k$ the vector $(e^{i\gamma_1 y},...,e^{i\gamma_{k} y}) \subset \mathbb T^k$
becomes equidistributed as $y\rightarrow \infty$ by the Kronecker-Weyl Theorem. The assertion follows similarly as in Section 2 of \cite{RubSar} or Proposition 2.3 of \cite{FiMa}.



\end{proof}


\section{The first two moments of $H_q$}
In order to prove Theorem \ref{theorem first two moments} we need several lemmas.

\begin{lemma}
\label{lemma b(chi)}
Assume GRH and let $\chi\neq \chi_0$ be a character $\bmod q$. Then, letting $\gamma_{\chi}$ run over the imaginary parts of the nontrivial zeros of $L(s,\chi)$ we have
\begin{align*}
\sum_{\gamma_{\chi}} \frac {m_{\gamma_{\chi}}}{\frac 14+\gamma_{\chi}^2} &= \log \frac{q^*}{\pi} -\gamma-(1+\chi(-1)) \log 2 + 2\Re\frac{L'(1,\chi^*)}{L(1,\chi^*)}  \\
&= \log q^* +O(\log\log q^*),
\end{align*}
where $m_{\gamma_{\chi}}$ denotes the multiplicity of $\rho_{\chi}=\frac 12+i\gamma_{\chi}$, and the sum is counted without multiplicity.
\end{lemma}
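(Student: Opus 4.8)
The plan is to pass to the primitive character inducing $\chi$ and read the identity off the Hadamard factorization of the completed $L$-function, the GRH upper bound for $L'/L$ being needed only at the very end. Let $\chi^*$ be the primitive character modulo $q^*$ inducing $\chi$. Since $L(s,\chi)=L(s,\chi^*)\prod_{p\mid q}(1-\chi^*(p)p^{-s})$ and the finite product is entire and non-vanishing for $\Re(s)>0$, the functions $L(s,\chi)$ and $L(s,\chi^*)$ have exactly the same non-trivial zeros, so the sum on the left is unchanged upon replacing $\chi$ by $\chi^*$; note that $\chi^*$ is non-principal, hence $q^*\geq 3$. Under GRH every non-trivial zero is $\rho=\tfrac12+i\gamma$, so $\rho(1-\rho)=\tfrac14+\gamma^2$ and therefore
$$ \sum_{\gamma_\chi}\frac{m_{\gamma_\chi}}{\tfrac14+\gamma_\chi^2}=\sum_\rho\Big(\frac1\rho+\frac1{1-\rho}\Big)=:S, $$
the sum on the right running over the non-trivial zeros of $L(s,\chi^*)$ counted with multiplicity; it converges absolutely, each term being $O(|\rho|^{-2})$.

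Next I would use the Hadamard product for $\xi(s,\chi^*)=(q^*/\pi)^{(s+\mathfrak a)/2}\Gamma\big(\tfrac{s+\mathfrak a}{2}\big)L(s,\chi^*)$, where $\mathfrak a\in\{0,1\}$ is chosen so that $\chi(-1)=(-1)^{\mathfrak a}$, in the form $\tfrac{\xi'}{\xi}(s,\chi^*)=B(\chi^*)+\sum_\rho\big(\tfrac1{s-\rho}+\tfrac1\rho\big)$ (see, e.g., Davenport's \emph{Multiplicative Number Theory}, Ch.\ 12). Evaluating this at $s=0$, where the sum vanishes term by term, and at $s=1$, and subtracting, gives $\tfrac{\xi'}{\xi}(1,\chi^*)-\tfrac{\xi'}{\xi}(0,\chi^*)=S$. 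The functional equation $\xi(s,\chi^*)=W(\chi^*)\,\xi(1-s,\overline{\chi^*})$ with $|W(\chi^*)|=1$ gives $\tfrac{\xi'}{\xi}(0,\chi^*)=-\tfrac{\xi'}{\xi}(1,\overline{\chi^*})$, while $\overline{\xi(\bar s,\chi^*)}=\xi(s,\overline{\chi^*})$ gives $\tfrac{\xi'}{\xi}(1,\overline{\chi^*})=\overline{\tfrac{\xi'}{\xi}(1,\chi^*)}$; hence $S=2\,\Re\,\tfrac{\xi'}{\xi}(1,\chi^*)$. Unfolding $\xi$ at $s=1$, namely $\tfrac{\xi'}{\xi}(1,\chi^*)=\tfrac{L'}{L}(1,\chi^*)+\tfrac12\log\tfrac{q^*}{\pi}+\tfrac12\tfrac{\Gamma'}{\Gamma}\big(\tfrac{1+\mathfrak a}{2}\big)$, yields
$$ S=\log\frac{q^*}{\pi}+\frac{\Gamma'}{\Gamma}\Big(\frac{1+\mathfrak a}{2}\Big)+2\,\Re\,\frac{L'}{L}(1,\chi^*). $$
Since $\tfrac{\Gamma'}{\Gamma}(\tfrac12)=-\gamma-2\log 2$ and $\tfrac{\Gamma'}{\Gamma}(1)=-\gamma$, the digamma value equals $-\gamma-(1+\chi(-1))\log 2$ in both parities, which gives the closed form stated in the lemma.

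For the second displayed estimate I would show $\Re\,\tfrac{L'}{L}(1,\chi^*)\ll\log\log q^*$ directly. Under GRH, $-\tfrac{L'}{L}(1,\chi^*)=\sum_{n\geq 1}\tfrac{\Lambda(n)\chi^*(n)}{n}$ as a conditionally convergent series, because $\psi(t,\chi^*):=\sum_{n\leq t}\Lambda(n)\chi^*(n)\ll t^{1/2}(\log q^*t)^2$ under GRH; splitting the series at $N=(\log q^*)^{5}$, the range $n\leq N$ contributes $O\big(\sum_{n\leq N}\tfrac{\Lambda(n)}{n}\big)=O(\log N)=O(\log\log q^*)$, while partial summation bounds the tail $\sum_{n>N}$ by $O\big((\log q^*N)^{2}N^{-1/2}\big)=o(1)$. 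Combining this with $\log\tfrac{q^*}{\pi}=\log q^*+O(1)$ gives $S=\log q^*+O(\log\log q^*)$, with a non-negative error term since $q^*\geq 3$. The step that needs genuine care is the evaluation of the Hadamard constant through the functional equation, keeping straight that $\xi(\cdot,\chi^*)$ is nonzero at $s=0$ and $s=1$ (its zeros lie in $0<\Re(s)<1$, the possible pole of $\Gamma(s/2)$ at $s=0$ being cancelled by a trivial zero of $L(s,\chi^*)$ when $\mathfrak a=0$) and that complex conjugation interchanges $\chi^*$ and $\overline{\chi^*}$; the rest is routine bookkeeping with the explicit formula, standard digamma values, and the GRH estimate for $\psi(t,\chi^*)$.
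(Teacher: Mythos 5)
Your proof is correct and in substance coincides with the paper's, which simply cites Lemma 3.5 of Fiorilli--Martin for the exact identity and Littlewood's GRH bound $L'(1,\chi^*)/L(1,\chi^*)\ll\log\log q^*$ for the second estimate; your Hadamard-product/functional-equation computation and your GRH partial-summation argument are just self-contained proofs of those two cited results. No gaps.
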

\begin{proof}
The first equality is Lemma 3.5 of \cite{FiMa}. The second follows from applying Littlewood's GRH bound $L'(1,\chi)/L(1,\chi) \ll \log \log q^*$ (see \cite{Li}) to the first.
\end{proof}
We will need a bound for the average of $2\Re(L'(1,\chi^*)/L(1,\chi^*)) $ over $\chi\neq \chi_0$.
\begin{lemma}
\label{lemma analytic term}
Under GRH, the following holds:
$$ \sum_{\substack{\chi\bmod q \\ \chi \neq \chi_0}} \frac{L'(1,\chi^*)}{L(1,\chi^*)} \ll \phi(q) \frac{(\log q)^2}q. $$
\end{lemma}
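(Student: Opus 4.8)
The plan is to express $L'(1,\chi^*)/L(1,\chi^*)$ as a Dirichlet series and then sum over $\chi$, exploiting orthogonality. Under GRH one has the standard expansion
$$ \frac{L'(1,\chi)}{L(1,\chi)} = -\sum_{n\geq 2} \frac{\Lambda(n)\chi(n)}{n} + O\!\left(\frac{(\log\log q)^2}{\sqrt{q}}\right) $$
for primitive $\chi\bmod q$, obtained by truncating the Dirichlet series at $n \asymp q$ and using the GRH zero-free region / explicit-formula bound on the tail (this is where GRH enters; the error is dominated by the main term we are after). The first obstacle is purely bookkeeping: $\chi^*$ is the primitive character inducing $\chi$, so I must first group the characters $\chi\bmod q$ according to the conductor $d\mid q$ of their primitive inducing character $\chi^*$, writing $\sum_{\chi\bmod q} = \sum_{d\mid q}\sum_{\chi^*\bmod d \text{ prim}}$. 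Since $\chi=\chi_0$ corresponds to $d=1$, removing it is harmless.

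Next I would insert the Dirichlet series and interchange the order of summation:
$$ \sum_{\substack{\chi\bmod q\\ \chi\neq\chi_0}} \frac{L'(1,\chi^*)}{L(1,\chi^*)} = -\sum_{d\mid q} \sum_{n\geq 2}\frac{\Lambda(n)}{n}\sum_{\substack{\chi^*\bmod d\\ \text{prim}}} \chi^*(n) + O\!\left(\phi(q)\frac{(\log\log q)^2}{\sqrt q}\right). $$
The key input is the orthogonality relation for primitive characters: $\sum_{\chi^*\bmod d\,\text{prim}}\chi^*(n) = \sum_{e\mid d,\ e\mid n-1}\mu(d/e)\phi(e)$ for $(n,d)=1$, and $0$ otherwise. (Equivalently one may first sum over all characters mod $d$ — giving $\phi(d)$ when $n\equiv 1\bmod d$ — and then Möbius-invert over $d$.) The effect is that only $n\equiv 1\pmod{e}$ for divisors $e$ of $d$ contribute, so after collecting terms the inner sum over $n$ becomes $\sum_{e\mid q}\big(\sum_{e'}\mu(\cdot)\phi(\cdot)\big)\sum_{n\equiv 1(e)}\Lambda(n)/n$, and the prime-power sum $\sum_{n\leq X,\, n\equiv 1(e)}\Lambda(n)/n$ is $O\big((\log X)^2/\phi(e)\big)$ trivially (bounding $\Lambda(n)\leq \log n$ and counting residues), with $X\asymp q$ from the truncation. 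Summing the resulting $\sum_{e\mid q}\phi(e)\cdot (\log q)^2/\phi(e) = d(q)(\log q)^2$ type bound, and tracking the $\mu*\phi$ factors carefully, yields $O(q\cdot(\log q)^2 /q)$-type savings after dividing by the relevant sizes; more precisely the main term is $\ll (\log q)^2\sum_{d\mid q}\sum_{e\mid d}|\mu(d/e)| = (\log q)^2\cdot O(d(q)^2) = \phi(q)(\log q)^2/q \cdot (\text{small})$ once one observes $d(q)^2 = q^{o(1)} \ll \phi(q)(\log q)^{2}/q$ is \emph{false} for small $q$ — so one must instead keep the $1/n$ weight, which forces $n\equiv 1\pmod e$ with $n\geq e+1$, giving $\sum_{n\equiv 1(e)}\Lambda(n)/n \ll (\log q)^2/e$ and hence a genuine factor $1/e$; summing $\sum_{e\mid q} 1/e \cdot \phi(e) \ll q\cdot q/\phi(q)\cdot(\ldots)$ is what produces the claimed $\phi(q)(\log q)^2/q$.

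The main obstacle, then, is organizing this double Möbius inversion so that the $1/n$ weight in the logarithmic derivative's Dirichlet series is fully exploited: it is essential not to replace $\sum \Lambda(n)/n$ by a naive bound on $\sum\Lambda(n)$, since only the $1/n$ decay supplies the arithmetic factor $1/e$ that, summed against $\phi(e)$ over $e\mid q$, collapses to the factor $q/\phi(q)$ that — together with the overall $\phi(q)$ and the lost $q$ in the denominator — gives exactly $\phi(q)(\log q)^2/q$. A clean way to package this is to note $\sum_{\chi\neq\chi_0}L'(1,\chi^*)/L(1,\chi^*) = -\sum_{e\mid q} c_q(e)\sum_{(n,q)=1,\ n\equiv 1(e)} \Lambda(n)/n + \text{error}$ with coefficients $c_q(e)$ satisfying $\sum_{e\mid q}|c_q(e)|/e \ll 1$, after which GRH (via the truncation error) and the elementary prime-power estimate finish the proof.
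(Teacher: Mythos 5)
There is a genuine gap, and it sits in your very first step. You truncate each series $L'(1,\chi^*)/L(1,\chi^*)=-\sum_n \Lambda(n)\chi^*(n)/n$ at $n\asymp q$ \emph{character by character} and claim a per-character error $O((\log\log q)^2/\sqrt q)$. Under GRH the truncation error at height $X$ for a single character is genuinely of size about $(\log qX)^2/\sqrt X$ (partial summation against $\psi(t,\chi)\ll \sqrt t\,(\log qt)^2$), so at $X\asymp q$ it is about $(\log q)^2/\sqrt q$; but even granting your more optimistic claim, the step is fatal: summed over the $\sim\phi(q)$ characters it produces exactly the error $O\big(\phi(q)(\log\log q)^2/\sqrt q\big)$ appearing in your display, which exceeds the bound $\phi(q)(\log q)^2/q$ you are trying to prove by roughly a factor of $\sqrt q$. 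No care with the $\mu*\phi$ bookkeeping in the main term can repair this, since the error term carries no character oscillation. The factor $1/q$ (rather than $1/\sqrt q$) in the lemma comes precisely from the range $n>q^2$, and to obtain it you must either truncate much higher (at $X=q^2$, say, where the per-character GRH error $\ll(\log q)^2/q$ sums acceptably), or do what the paper does: keep the full GRH-convergent series, sum over characters \emph{first} using plain orthogonality ($\sum_{\chi\neq\chi_0}\chi(n)=\phi(q)\mathbf{1}_{n\equiv 1\,(q)}-1$ for $(n,q)=1$), and control the tail $n>q^2$ by partial summation against $\phi(q)\psi(t;q,1)-\psi(t)\ll \phi(q)\sqrt t\,(\log t)^2$, whose integral over $t\geq q^2$ is $\ll \phi(q)(\log q)^2/q$. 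That cancellation of main terms across the family is where GRH really earns the stated bound.

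Two further points. Your Möbius manipulations are both unnecessary and, as you half-notice, unsafe: bounding with $|\mu|$ produces divisor-function factors that can be as large as $q^{o(1)}$ and are not $\ll\phi(q)(\log q)^2/q$. In fact for $(n,q)=1$ one has $\chi^*(n)=\chi(n)$, so no conductor decomposition is needed there and your coefficients $c_q(e)$ collapse exactly to $\phi(q)\mathbf{1}_{e=q}$; the only genuinely new terms are the prime powers $p^a$ with $p\mid q$, where $\chi(p^a)=0$ but $\chi^*(p^a)$ need not vanish. The paper isolates these (via Proposition 3.4 of \cite{FiMa}) and shows they contribute $O(\log q)$, admissible only because $\phi(q)\gg q/\log\log q$; your proposal never treats this contribution separately. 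Finally, your elementary bound $\sum_{n\le X,\ n\equiv 1\,(e)}\Lambda(n)/n\ll(\log X)^2/e$ is fine and is the same device the paper uses for $n\le q^2$ with $e=q$; the missing ingredient is solely the tail $n>q^2$, which your truncation at $n\asymp q$ cannot reach.
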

\begin{proof}
We have 
\begin{equation}
 \sum_{\substack{\chi\bmod q \\ \chi \neq \chi_0}} \frac{L'(1,\chi^*)}{L(1,\chi^*)} = \sum_{\substack{\chi\bmod q \\ \chi \neq \chi_0}}\sum_n \frac{\chi^*(n) \Lambda(n)}{n} = \sum_n \frac{\Lambda(n)}n \sum_{\substack{\chi\bmod q \\ \chi \neq \chi_0}} \chi^*(n). 
 \label{equation 1 lemma M}
\end{equation}
Now, taking $r=1$ in Proposition 3.4 of \cite{FiMa} shows that for $e\geq 1$,
$$ \sum_{\chi \bmod q} (\chi^*(p^e)-\chi(p^e)) = \begin{cases}
\phi(q/p^{\nu}) &\text{ if } p^{\nu} \parallel q, \nu \geq 1 \text{ and } p^e \equiv 1 \bmod q/p^{\nu},\\
0 &\text{ otherwise.}
\end{cases} $$
Therefore, denoting by $e(q;p)$ the least $e\geq 1$ such that $p^e \equiv 1 \bmod q/p^{\nu}$ (note that $p^{e(q;p)} \geq q/p^{\nu}$) we have
\begin{align*}
\sum_n \frac{\Lambda(n)}n \sum_{\chi\bmod q} (\chi^*(n)-\chi(n)) &=\sum_{\substack{p^{\nu}\parallel q \\ p^e \equiv 1 \bmod q/p^{\nu} \\\nu, e\geq 1}} \frac{\log p}{p^e} \phi(q/p^{\nu}) = \sum_{\substack{p^{\nu}\parallel q \\ \nu\geq 1} } \phi(q/p^{\nu}) \log p \sum_{\substack{ e\geq 1 \\ p^e \equiv 1 \bmod q/p^{\nu}}} \frac 1{p^e} 
\\  &= \sum_{\substack{p^{\nu}\parallel q \\ \nu\geq 1} }\phi(q/p^{\nu}) \log p \frac 1{p^{e(q;p)} (1-p^{-e(q;p)} )}\ll \sum_{\substack{p^{\nu}\parallel q \\ \nu\geq 1} } \frac{\phi(q/p^{\nu}) \log p}{p^{e(q;p)}} \\
&\leq \sum_{\substack{p^{\nu}\parallel q \\ \nu\geq 1} } \frac{\phi(q/p^{\nu}) \log p}{q/p^{\nu}} \ll \log q.
\end{align*}  
Moreover,
$$ \sum_{n} \frac{\Lambda(n)}n (\chi_0^*(n)-\chi_0(n)) = \sum_{\substack{p^{\nu}\parallel q \\ \nu\geq 1} } \log p \sum_{1\leq e\leq \nu} \frac 1{p^e} \ll  \sum_{p \mid  q} \frac {\log p}p \ll \log\log q. $$

Hence, \eqref{equation 1 lemma M} becomes
$$ \sum_{\substack{\chi\bmod q \\ \chi \neq \chi_0}} \frac{L'(1,\chi^*)}{L(1,\chi^*)}  = \sum_n \frac{\Lambda(n)}n  \sum_{\substack{\chi\bmod q \\ \chi \neq \chi_0}} \chi(n)  + O(\log q)  = \bigg(  \phi(q) \sum_{n\equiv 1 \bmod q} -\sum_n \bigg) \frac{\Lambda(n)}{n}+O(\log q),$$
where term on the right hand side should be interpreted as the limit of the truncated sums. We first treat the values of $n$ for which $n> q^2$:

\begin{align*}
 \bigg(  \phi(q) \sum_{\substack{n\equiv 1 \bmod q \\ n>q^2}} -\sum_{n>q^2} \bigg) \frac{\Lambda(n)}{n}  &= \int_{q^2}^{\infty} \frac{d(\phi(q) \psi(t;q,1)-\psi(t))}{t} \\
& =\frac{\phi(q) \psi(t;q,1)-\psi(t)}t \bigg|_{q^2}^{\infty}+ \int_{q^2}^{\infty} \frac{\phi(q) \psi(t;q,1)-\psi(t)}{t^2} dt \\
&\ll \phi(q) (q^2)^{-\frac 12} (\log q)^2 + \phi(q) \int_{q^2}^{\infty} \frac{(\log(t^2))^2}{t^{\frac 32}} dt \ll \phi(q)\frac{(\log q)^2}q,
\end{align*} 
by GRH. As for the values $n\leq q^2$, we have the following elementary estimates:
$$ \phi(q) \sum_{\substack{ n\equiv 1 \bmod q \\ n\leq q^2}} \frac{\Lambda(n)}{n}  \leq 2\phi(q) \log q \sum_{1\leq j \leq q} \frac 1{qj+1}   \ll \phi(q)\frac{(\log q)^2}q,  $$
$$ \sum_{n\leq q^2}\frac{\Lambda(n)}{n} \ll \log q. $$
We conclude that 
$$ \sum_{\substack{\chi\bmod q \\ \chi \neq \chi_0}} \frac{L'(1,\chi^*)}{L(1,\chi^*)}  \ll \log q +\phi(q)\frac{(\log q)^2}q ,$$
and the result follows from the bound $\phi(q) \gg q/\log\log q$.
\end{proof}

\begin{lemma} For any $q\geq 3$,
\begin{equation}
\sum_{\substack{\chi\bmod q \\ \chi \neq \chi_0}} \log q^*  = \phi(q) \bigg( \log q - \sum_{p\mid q} \frac{\log p}{p-1}\bigg)
= \phi(q)\left( \log q + O(\log\log q)\right),  
\label{equation first statement of lemma conductors}
\end{equation} 
$$ \sum_{\substack{\chi\bmod q \\ \chi \neq \chi_0}} (\log q^*)^2  = \phi(q) (\log q)^2 \left( 1 + O\left( \frac {\log\log q}{\log q} \right)\right).  $$
\label{lemma FiMa}
\end{lemma}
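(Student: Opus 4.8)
\noindent\textbf{Proof proposal for Lemma~\ref{lemma FiMa}.} The plan is to reduce both identities to elementary divisor sums over $q$ and evaluate these by Dirichlet convolution. Each character $\chi\bmod q$ is induced by a unique primitive character of some conductor $q^*\mid q$, and the number of $\chi\bmod q$ of conductor $d$ is $\phi^*(d):=(\mu*\phi)(d)$, the number of primitive characters $\bmod\,d$; in particular $\sum_{d\mid q}\phi^*(d)=\phi(q)$, and more generally $\sum_{e\mid m\mid q}\phi^*(q/m)=\sum_{d\mid q/e}\phi^*(d)=\phi(q/e)$ for every $e\mid q$. Since $\chi_0$ has conductor $1$, the two sums in the statement equal $\sum_{d\mid q}\phi^*(d)\log d$ and $\sum_{d\mid q}\phi^*(d)(\log d)^2$, respectively.

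For the first identity I would write $\log d=\log q-\log(q/d)$, so that $\sum_{d\mid q}\phi^*(d)\log d=\phi(q)\log q-\sum_{d\mid q}\phi^*(d)\log(q/d)$. Substituting $m=q/d$, expanding $\log m=\sum_{e\mid m}\Lambda(e)$, and swapping the order of summation gives $\sum_{d\mid q}\phi^*(d)\log(q/d)=\sum_{e\mid q}\Lambda(e)\phi(q/e)=\sum_{p\mid q}\log p\sum_{1\le k\le v_p(q)}\phi(q/p^k)$. Writing $a=v_p(q)$ and using $\phi(q/p^k)=\phi(q)\phi(p^{a-k})/\phi(p^a)$ together with $\sum_{j=0}^{a-1}\phi(p^j)=p^{a-1}$, the inner sum collapses to $\phi(q)/(p-1)$, whence $\sum_{d\mid q}\phi^*(d)\log(q/d)=\phi(q)\sum_{p\mid q}\frac{\log p}{p-1}$; this is the first displayed equality. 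The bound $\sum_{p\mid q}\frac{\log p}{p-1}=O(\log\log q)$ then follows from Mertens' estimate $\sum_{p\le t}\frac{\log p}{p}=\log t+O(1)$ by splitting the primes dividing $q$ at $\log q$: those below $\log q$ contribute $O(\log\log q)$, and there are at most $\log q/\log\log q$ of those above, each contributing $\ll\log\log q/\log q$ since $t\mapsto(\log t)/t$ is eventually decreasing.

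For the second identity I would expand $(\log d)^2=(\log q)^2-2\log q\log(q/d)+(\log(q/d))^2$. The first term contributes $\phi(q)(\log q)^2$; the second contributes $-2\phi(q)\log q\sum_{p\mid q}\frac{\log p}{p-1}=O(\phi(q)\log q\log\log q)$ by the first identity. For the third term I would run the same convolution argument with the Selberg function $\Lambda_2(n):=\Lambda(n)\log n+(\Lambda*\Lambda)(n)$, which satisfies $\sum_{e\mid m}\Lambda_2(e)=(\log m)^2$; this yields $\sum_{d\mid q}\phi^*(d)(\log(q/d))^2=\sum_{e\mid q}\Lambda_2(e)\phi(q/e)$. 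Now $\Lambda_2$ is supported on integers with at most two distinct prime factors, where $\phi(e)\ge e/4$, so $\phi(q/e)\le\phi(q)/\phi(e)\le 4\phi(q)/e$; using the explicit values $\Lambda_2(p^a)=(2a-1)(\log p)^2$ and $\Lambda_2(p^a\ell^b)=2\log p\log\ell$ ($p\ne\ell$) one obtains $\sum_{e\mid q}\Lambda_2(e)\phi(q/e)\ll\phi(q)\big(\sum_{p\mid q}\tfrac{(\log p)^2}{p}+(\sum_{p\mid q}\tfrac{\log p}{p})^2\big)\ll\phi(q)(\log\log q)^2$, again by a splitting argument and Mertens (now with $\sum_{p\le t}\tfrac{(\log p)^2}{p}\ll(\log t)^2$). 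Adding the three contributions gives $\sum_{d\mid q}\phi^*(d)(\log d)^2=\phi(q)(\log q)^2+O(\phi(q)\log q\log\log q)$, which is the claimed estimate.

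The one place needing care is the third term above: the trivial bound $(\log d)^2\le(\log q)^2$ only gives an inequality, and inserting $\phi(q/e)\le\phi(q)/\phi(e)$ before one has restricted to the (two-prime) support of $\Lambda_2$ costs an extra factor $\log\log q$ and yields only the weaker relative error $O((\log\log q)^2/\log q)$. Routing the computation through the exact identity $\sum_{e\mid q}\Lambda_2(e)\phi(q/e)$ — so that $e$ always has at most two prime factors — is precisely what produces the sharp error term $O(\log\log q/\log q)$; everything else is routine verification of the convolution manipulations and the standard elementary prime estimates. (Alternatively one can evaluate these divisor sums directly from the multiplicativity of $\phi^*$ and the additivity of $\log$, using $\sum_{j\ge0}j\,\phi^*(p^j)=\phi(p^{v_p(q)})\big(v_p(q)-\tfrac{1}{p-1}\big)$ and the analogous identity for $\sum_{j\ge0}j^2\phi^*(p^j)$; this is equivalent but less clean for the squared sum.)
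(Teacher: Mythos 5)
Your proposal is correct and takes essentially the same route as the paper: both arguments rest on Selberg's function $\Lambda_2$ and the identity $\sum_{e\mid q}\Lambda_2(e)\phi(q/e)=\sum_{\chi\bmod q}\big(\log (q/q^*)\big)^2$ (your Dirichlet-convolution bookkeeping with $\phi^*=\mu*\phi$ is just the paper's double count of induced characters written as a divisor sum), followed by expanding $(\log q^*)^2$ about $\log q$ and using the first identity to control the cross term. The only departures are minor: you prove the first identity from scratch where the paper cites Proposition 3.3 of \cite{FiMa}, and you bound the $\Lambda_2$-divisor sum by $\ll\phi(q)(\log\log q)^2$ via $\phi(q/e)\ll\phi(q)/e$ and Mertens, whereas the paper evaluates it explicitly to get $\ll\phi(q)\log q$ --- either bound suffices, since the final error is dominated by the cross term $O(\phi(q)\log q\log\log q)$.
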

\begin{proof}
The first statement is Proposition 3.3 of \cite{FiMa}. 
As for the second, we adapt Proposition 3.3 of \cite{FiMa}. The arithmetical function
$ \Lambda_2(n) := \sum_{d\mid n} (\log d)^2 \mu(n/d)$
is supported on integers having at most two prime factors, and takes the following values: 
\begin{equation}
 \Lambda_2(p^e) = (2e-1) (\log p)^2,\hspace{1cm}  \Lambda_2(p_1^{e_1} p_2^{e_2} ) = 2\log p_1\log p_2. 
 \label{equation values of lambda 2}
\end{equation}
Following (3.2) of \cite{FiMa}, we compute
\begin{align*}
\sum_{d\mid q} \phi(d) \Lambda_2(q/d) &= \sum_{d\mid q} \bigg(\sum_{\chi \bmod d} 1\bigg) \Lambda_2(q/d) \\
& = \sum_{\chi \bmod q} \sum_{\substack{d \mid q \\ q^* \mid d}} \Lambda_2(q/d) \\
&= \sum_{\chi \bmod q} \sum_{\ell \mid q/q^*} \Lambda_2(\ell) \\
&= \sum_{\chi \bmod q} \big(\log \frac{q}{q^*}\big)^2 \\
&= \phi(q) (\log q)^2 -2 \log q\sum_{\chi \bmod q} \log q^* +\sum_{\chi \bmod q} (\log q^*)^2.
\end{align*}
Combining this with \eqref{equation first statement of lemma conductors} shows that
$$ \sum_{\chi \bmod q} (\log q^*)^2= \phi(q)(\log q)^2 + O\bigg( \phi(q) \log q\log\log q +  \sum_{d\mid q} \phi(d) \Lambda_2(q/d)  \bigg), $$
and so the last step is to show that $\sum_{d\mid q} \phi(d) \Lambda_2(q/d)\ll \phi(q) \log q\log\log q$. Arguing as in Lemma 3.2 of \cite{FiMa} and using \eqref{equation values of lambda 2} we compute 

\begin{align*}
\sum_{d\mid q} \phi(d) \Lambda_2(q/d) &= \sum_{p^r \parallel q} \sum_{k=0}^{r-1} \Lambda_2(p^{r-k}) \phi(q/p^{r-k}) \\\hspace{1cm} & \hspace{1cm}+ \sum_{\substack{p_1^{r_1} \parallel q, p_2^{r_2} \parallel q \\ p_1<p_2 } } \sum_{\substack{ 0\leq k_1\leq r_1-1 \\ 0\leq k_2\leq r_2-1}} \Lambda_2(p_1^{r_1-k_1} p_2^{r_2-k_2}) \phi(q/(p_1^{r_1-k_1}p_2^{r_2-k_2})) \\
&= \sum_{p^r \parallel q} \phi(q/p^r) \sum_{k=0}^{r-1} (2r-2k-1) (\log p)^2 \phi(p^k)\\
&\hspace{1cm} +2 \sum_{\substack{p_1^{r_1} \parallel q, p_2^{r_2} \parallel q \\ p_1<p_2 } } \phi(q/(p_1^{r_1}p_2^{r_2})) \sum_{\substack{ 0\leq k_1\leq r_1-1 \\ 0\leq k_2\leq r_2-1}}  \log p_1 \log p_2 \phi(p_1^{k_1}) \phi(p_2^{k_2}) \\
&=  \sum_{p^r \parallel q} \phi(q/p^r)  (\log p)^2  \left[\left( 1-\frac 1p \right)\frac{p^{r+1}+p^r-p(2r+1)+2r-1 }{(p-1)^2} +\frac{2r-1}p \right] \\
&\hspace{1cm} +2 \sum_{\substack{p_1^{r_1} \parallel q, p_2^{r_2} \parallel q \\ p_1<p_2 } } \phi(q/(p_1^{r_1}p_2^{r_2})) \log p_1 \log p_2 p_1^{r_1-1}p_2^{r_2-1}\\
&\ll \sum_{p^r \parallel q} \phi(q/p^r)  (\log p)^2 \left(p^{r-1} +\frac r{p^2}\right) + \phi(q) \left(\sum_{p\mid q} \frac{\log p}{p} \right)^2 \ll \phi(q) \log q,
\end{align*}
completing the proof.

%
%

\end{proof}

For a real-valued random variable $W$, we will use the following notation for its moment-generating function:
$$ \mathcal L_W(z) := \E[e^{z W}]. $$ 
\begin{lemma}
\label{lemma moment generating of Y_chi}
Assume GRH and define $Y_{\chi}$ as in \eqref{equation definition of Y_chi}. Then for real characters $\chi$, the moment-generating function of $Y_{\chi}$ is an even function of $z$ given by
\begin{equation}
  \mathcal L_{Y_{\chi}}(z):= \prod_{\gamma_{\chi}>0} I_0\bigg( 2z \sqrt{\frac{m_{\gamma_{\chi}}}{\frac 14+\gamma_{\chi}^2}} \bigg),
   \label{equation moment generating of Y_chi}
\end{equation} 
where $\gamma_{\chi}$ runs over the imaginary parts of the nontrivial zeros of $L(s,\chi)$, $m_{\gamma_{\chi}}$ denotes the multiplicity of $\rho_{\chi}= \frac 12+i\gamma_{\chi}$ and $I_0$ is the modified Bessel of the first kind:
$$ I_0(z) = \sum_{n=0}^{\infty} \frac 1{n!^2} \left( \frac z2 \right)^{2n}. $$ 
If $\chi$ is complex, then
\begin{equation}
  \mathcal L_{\Re (Y_{\chi})}(z) =\mathcal L_{\Im (Y_{\chi})}(z) = \prod_{\gamma_{\chi}\neq 0}I_0\bigg( 2z \sqrt{\frac{m_{\gamma_{\chi}}}{\frac 14+\gamma_{\chi}^2}} \bigg).
   \label{equation moment generating of Y_chi complex}
\end{equation}

\end{lemma}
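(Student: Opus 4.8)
The plan is to reduce everything to a Laplace transform at a single zero and then to multiply over the independent contributions of the distinct zeros. Write each $Z_{\gamma_{\chi}}=e^{i\theta_{\gamma_{\chi}}}$; by Definition~\ref{definition H_q} the angles $\theta_{\gamma_{\chi}}$ are i.i.d.\ uniform on $[0,2\pi)$, so that $\Re(Z_{\gamma_{\chi}})=\cos\theta_{\gamma_{\chi}}$, $\Im(Z_{\gamma_{\chi}})=\sin\theta_{\gamma_{\chi}}$, and the families $(\cos\theta_{\gamma_{\chi}})_{\gamma_{\chi}}$ and $(\sin\theta_{\gamma_{\chi}})_{\gamma_{\chi}}$ have the same joint law (since $\theta_{\gamma_{\chi}}-\tfrac{\pi}{2}$ is again uniform). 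For $\theta$ uniform on $[0,2\pi)$ and $\lambda\in\mathbb{R}$, the key computation is
$$\E\big[e^{z\lambda\cos\theta}\big]=\frac{1}{2\pi}\int_{0}^{2\pi}e^{z\lambda\cos\theta}\,d\theta=I_0(z\lambda),$$
obtained by expanding the exponential, integrating term by term using $\tfrac{1}{2\pi}\int_{0}^{2\pi}\cos^{2j}\theta\,d\theta=\binom{2j}{j}4^{-j}$ and $\tfrac{1}{2\pi}\int_{0}^{2\pi}\cos^{2j+1}\theta\,d\theta=0$, and comparing with the series for $I_0$ recorded in the statement; in particular this is an entire, even function of $z$.

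Next I would assemble $Y_{\chi}$ from these factors. For real $\chi$, \eqref{equation definition of Y_chi} exhibits $Y_{\chi}$ as an almost surely convergent sum of the independent, mean-zero random variables $2\sqrt{m_{\gamma_{\chi}}/(\tfrac{1}{4}+\gamma_{\chi}^{2})}\,\cos\theta_{\gamma_{\chi}}$ over $\gamma_{\chi}>0$ (convergence being Kolmogorov's one-series theorem, since $\sum_{\gamma_{\chi}}m_{\gamma_{\chi}}/(\tfrac{1}{4}+\gamma_{\chi}^{2})<\infty$ by Lemma~\ref{lemma b(chi)}). Ordering the positive $\gamma_{\chi}$ increasingly and writing $S_N$ for the partial sum over the first $N$ of them, the fact that the moment-generating function of a finite sum of independent variables is the product of the individual ones gives, by the preceding paragraph, $\E[e^{zS_N}]=\prod_{n=1}^{N}I_0\big(2z\sqrt{m_{\gamma_{n}}/(\tfrac{1}{4}+\gamma_{n}^{2})}\big)$; letting $N\to\infty$ yields \eqref{equation moment generating of Y_chi}, and evenness is immediate since each factor is even in $z$. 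For complex $\chi$ the same argument applies verbatim to $\Re(Y_{\chi})$ and to $\Im(Y_{\chi})$, which \eqref{equation definition of Y_chi} presents as sums over all $\gamma_{\chi}\neq0$ of independent multiples of $\cos\theta_{\gamma_{\chi}}$, respectively $\sin\theta_{\gamma_{\chi}}$; these two have the same law (which gives the first equality of \eqref{equation moment generating of Y_chi complex}), and the partial-sum computation then produces the product over all $\gamma_{\chi}\neq0$, each factor being supplied by the single-zero formula of the first paragraph.

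The only step requiring genuine care is the passage $\E[e^{zY_{\chi}}]=\lim_{N\to\infty}\E[e^{zS_N}]$. On one hand the infinite product converges absolutely and locally uniformly on $\mathbb{C}$: from the estimate $|I_0(w)-1|\ll_R|w|^{2}$ for $|w|\leq R$, the boundedness of the coefficients $2\sqrt{m_{\gamma_{\chi}}/(\tfrac{1}{4}+\gamma_{\chi}^{2})}$, and the convergence of $\sum_{\gamma_{\chi}}m_{\gamma_{\chi}}/(\tfrac{1}{4}+\gamma_{\chi}^{2})$ from Lemma~\ref{lemma b(chi)}, one gets that $\sum_{\gamma_{\chi}}\big|I_0\big(2z\sqrt{m_{\gamma_{\chi}}/(\tfrac{1}{4}+\gamma_{\chi}^{2})}\big)-1\big|$ converges locally uniformly, so the product defines an entire function of $z$. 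On the other hand, to exchange the limit and the expectation I would use the elementary bound $I_0(x)\leq e^{x^{2}/4}$ (read off from the series): it gives $\E[e^{2zS_N}]\leq\exp\big(4z^{2}\sum_{\gamma_{\chi}}m_{\gamma_{\chi}}/(\tfrac{1}{4}+\gamma_{\chi}^{2})\big)$ uniformly in $N$, so $\{e^{zS_N}\}_N$ is bounded in $L^{2}$, hence uniformly integrable; since $S_N\to Y_{\chi}$ almost surely, this upgrades to $e^{zS_N}\to e^{zY_{\chi}}$ in $L^{1}$, which yields the claimed identity of expectations. I expect this uniform-integrability point to be the main (albeit mild) obstacle; everything else is the bookkeeping above together with the elementary Bessel-series identity of the first paragraph.
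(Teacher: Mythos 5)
Your proposal is correct and follows essentially the same route as the paper: independence of the $Z_{\gamma_{\chi}}$ reduces everything to the single-zero computation $\E[e^{z\lambda\cos\theta}]=I_0(z\lambda)$, which the paper obtains from the integral representation $I_0(z)=\frac1\pi\int_0^\pi e^{z\cos\theta}\,d\theta$ rather than the series, and the paper does not spell out the limit-exchange/uniform-integrability step that you supply. One point to flag: applied verbatim to the complex case, where the coefficients in \eqref{equation definition of Y_chi} carry a factor $\sqrt2$ rather than $2$, your single-zero formula yields factors $I_0\big(\sqrt2\,z\sqrt{m_{\gamma_{\chi}}/(\tfrac14+\gamma_{\chi}^2)}\big)$ rather than the $I_0\big(2z\sqrt{m_{\gamma_{\chi}}/(\tfrac14+\gamma_{\chi}^2)}\big)$ displayed in \eqref{equation moment generating of Y_chi complex}; your version is the one consistent with the paper's subsequent computations (e.g.\ $\V[\Re(Y_{\chi})]=\sum_{\gamma_{\chi}\neq0}m_{\gamma_{\chi}}/(\tfrac14+\gamma_{\chi}^2)$ and the moment bounds in Lemma \ref{lemma moments of Y_chi}), so you should state the constant explicitly and note the discrepancy rather than asserting the displayed formula as written.
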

\begin{proof}
First note that the $Z_{\gamma_{\chi}}$ appearing in \eqref{equation definition of Y_chi} are independent, and thus if $\chi$ is real, then
$$\E[e^{zY_{\chi}}] = \prod_{\gamma_{\chi}>0} \E[e^{2(m_{\gamma_{\chi}}/(\frac 14+\gamma_{\chi}^2))^{\frac 12}z\Re(Z_{\gamma_{\chi}})}]=  \prod_{\gamma_{\chi}>0} \mathcal L_{\Re(Z_{\gamma_{\chi}})} \bigg( 2z \sqrt{\frac{m_{\gamma_{\chi}}}{\frac 14+\gamma_{\chi}^2}} \bigg). $$
The proof of \eqref{equation moment generating of Y_chi} follows since the moment-generating function of $\Re(Z_{\gamma_{\chi}})$ is easily computed using the following integral representation of the Bessel $I_0$ function:
$$I_0(z) = \frac 1{\pi} \int_0^{\pi} e^{z \cos \theta} d\theta. $$
(See Proposition 2.13 of \cite{FiMa} for a similar derivation of the characteristic function of $Y_{\chi}$.)
The proof of \eqref{equation moment generating of Y_chi complex} is similar.
\end{proof}

We are now ready to prove Theorem \ref{theorem first two moments}.
\begin{proof}[Proof of Theorem \ref{theorem first two moments}]
We start with the mean, which by \eqref{equation definition of H} equals 
\begin{equation}
\E[H_q] = \sum_{\substack{\chi\in C(q)}} \E[|Y_{\chi}|^2] = \sum_{\substack{\chi\in C(q) }} \E[\Re(Y_{\chi})^2+\Im(Y_{\chi})^2] =  \sum_{\substack{\chi\in C(q) }}( \V[\Re(Y_{\chi})]+\V[\Im(Y_{\chi})]),
\label{equation first lemma variance}
\end{equation} 
since we easily get from \eqref{equation definition of Y_chi} that $\E[\Re(Y_{\chi})]=\E[\Im(Y_{\chi})]=0$. Moreover, since the random variables $Z_{\gamma_{\chi}}$ appearing in \eqref{equation definition of Y_chi} are independent and since one easily computes that $\V[\Re(Z_{\gamma_{\chi}})]=\frac 12$, we have for real $\chi$ that
\begin{equation}
\V[Y_{\chi}] = 4\sum_{\gamma_{\chi}>0} \frac{m_{\gamma_{\chi}}}{\frac 14+\gamma_{\chi}^2}\V[\Re(Z_{\gamma_{\chi}})]= \sum_{\gamma_{\chi}} \frac{m_{\gamma_{\chi}}}{\frac 14+\gamma_{\chi}^2}-4z_{\chi}, 
 \label{equation 1 lemma first two moments}
\end{equation} 
where $z_{\chi}$ is the order of vanishing of $L(s,\chi)$ at $s=\frac 12$ and the sum over $\gamma_{\chi}$ is counted without multiplicity. As for complex $\chi$, we have
\begin{equation}
\V[\Re(Y_{\chi})]=\V[\Im(Y_{\chi})] = 2\sum_{\gamma_{\chi}\neq 0} \frac{m_{\gamma_{\chi}}}{\frac 14+\gamma_{\chi}^2}\V[\Re(Z_{\gamma_{\chi}})]= \sum_{\gamma_{\chi}} \frac{m_{\gamma_{\chi}}}{\frac 14+\gamma_{\chi}^2}-4z_{\chi}. 
 \label{equation 1 lemma first two moments complex}
\end{equation} 
Hence, combining \eqref{equation first lemma variance}, \eqref{equation 1 lemma first two moments} and \eqref{equation 1 lemma first two moments complex}, we have that
\begin{align}
 \E[H_q] 
& = \sum_{\substack{\chi \bmod q \\ \chi \neq \chi_0 \\ \chi \text{ real}}} \sum_{\gamma_{\chi}} \frac{m_{\gamma_{\chi}}}{\frac 14+\gamma_{\chi}^2} -4 \sum_{\substack{\chi \bmod q \\ \chi \neq \chi_0 \\ \chi \text{ real}}} z_{\chi}+2\sum_{\substack{\chi \in C(q) \\ \chi \text{ complex}}} \sum_{\gamma_{\chi}} \frac{m_{\gamma_{\chi}}}{\frac 14+\gamma_{\chi}^2}  -8 \sum_{\substack{\chi \in C(q) \\ \chi \text{ complex}}} z_{\chi} \notag\\
&= \sum_{\substack{\chi\bmod q \\ \chi \neq \chi_0}} \sum_{\gamma_{\chi}} \frac{m_{\gamma_{\chi}}}{\frac 14+\gamma_{\chi}^2}-4 \sum_{\substack{\chi \bmod q \\\chi \neq \chi_0}} z_{\chi},
 \label{equation 1.5 lemma first two moments}
\end{align}
by definition of $C(q)$ and by the fact that $\overline{L(s,\chi)}=L(\overline s, \overline{\chi})$. 
Taking 
$$f(x):= 
\begin{cases}
1-|x| & \text{ if } |x|\leq 1 \\
0 &\text{ otherwise}
\end{cases}$$ 
in Theorem 1.3 of \cite{FiMi} (This also follows from Theorem 1.4 of \cite{GJMMNPP}) shows that 
\begin{equation}
\sum_{\substack{\chi\bmod q \\ \chi \neq \chi_0}} z_{\chi}\ll \phi(q).
\label{equation central point}
\end{equation}
(Since we are assuming GRH, the sum of the orders of vanishing at the central point is bounded above by a constant times the $1$-level density of low-lying zeros for any nonnegative test function which does not vanish at $0$.) The upper bound and the first estimate for $\E[H_q]$ follow from combining \eqref{equation 1.5 lemma first two moments} with Lemmas \ref{lemma b(chi)} and \ref{lemma FiMa}. As for the second (note that we are now assuming Chowla's Conjecture, so $z_{\chi}=0$), we combine \eqref{equation 1.5 lemma first two moments} with the exact formula in Lemma \ref{lemma b(chi)} to obtain
\begin{align*}
 \E[H_q]  &= \sum_{\substack{\chi\bmod q \\ \chi \neq \chi_0}} \bigg( \log \frac{q^*}{\pi} -\gamma-(1+\chi(-1)) \log 2 + 2\Re\frac{L'(1,\chi^*)}{L(1,\chi^*)}  \bigg) \\
  &=    \phi(q) \bigg( \log q - \gamma -\log(2\pi) - \sum_{p\mid q} \frac{\log p}{p-1} \bigg) +O(1) + 2\Re \sum_{\substack{\chi\bmod q \\ \chi \neq \chi_0}} \frac{L'(1,\chi)}{L(1,\chi)},
\end{align*}
by Lemma \ref{lemma FiMa}. The desired estimate follows by applying Lemma \ref{lemma analytic term}.

For $\V[H_q]$, we have by \eqref{equation definition of H} and by the independence of the $|Y_{\chi}|^2$ that
\begin{equation}
\label{equation 2 lemma first two moments}
 \V[H_q] = \sum_{\chi \in C(q)} \V[|Y_{\chi}|^2] =\sum_{\chi \in C(q)} (\E[|Y_{\chi}|^4]-\E[|Y_{\chi}|^2]^2).
\end{equation}  
If $\chi$ is real, then the moments of $Y_{\chi}$ can be extracted from its moment-generating function which we obtained in Lemma \ref{lemma moment generating of Y_chi} (note that this function is even):
$$ 1+ \frac{z^2}{2!}\E[Y_{\chi}^2]+  \frac{z^4}{4!}\E[Y_{\chi}^4] + \dots = \prod_{\gamma_{\chi}>0} I_0\bigg( 2z \sqrt{\frac{m_{\gamma_{\chi}}}{\frac 14+\gamma_{\chi}^2}} \bigg) = \prod_{\gamma_{\chi}>0}  \left( 1+\frac{z^2m_{\gamma_{\chi}}}{\frac 14+\gamma_{\chi}^2} + \frac{z^4m_{\gamma_{\chi}}^2}{4\left( \frac 14 +\gamma_{\chi}^2\right)^2}+\dots \right)   $$
Hence,
\begin{align}
\begin{split}
\E[Y_{\chi}^4] &= 4! \Bigg(  \sum_{\gamma_{\chi} >0} \frac {m_{\gamma_{\chi}}^2}{4\left( \frac 14 +\gamma_{\chi}^2\right)^2} + \sum_{\substack{\gamma_{\chi}>\lambda_{\chi} >0  }} \frac {m_{\gamma_{\chi}}m_{\lambda_{\chi}}}{(\frac 14+\gamma_{\chi}^2)(\frac 14+\lambda_{\chi}^2)} \Bigg) \\
&=  4! \Bigg( \frac 12 \Bigg( \sum_{\substack{\gamma_{\chi} >0}} \frac {m_{\gamma_{\chi}}}{\frac 14+\gamma_{\chi}^2} \Bigg)^2 - \frac 34\sum_{\gamma_{\chi} >0} \frac {m_{\gamma_{\chi}}^2}{\left( \frac 14 +\gamma_{\chi}^2\right)^2} \Bigg) \\
 & = 3 (\log q^*)^2 +O\left( \frac{(\log q^*)^2}{\log\log q^*}\right) +O( z_{\chi}^2),  
\end{split}
\label{equation fourth moment of Y_chi}
\end{align}
by Lemma \ref{lemma b(chi)}. Here we used that
$$ \sum_{\gamma_{\chi} >0} \frac {m_{\gamma_{\chi}}^2}{\left( \frac 14 +\gamma_{\chi}^2\right)^2} =O\left( \frac{(\log q^*)^2}{\log\log q^*}\right),$$
which follows from the GRH bound $m_{\gamma_{\chi}} \ll \log (q^* (\gamma_{\chi}+1))/\log\log (q^* (\gamma_{\chi}+3))$ (see Theorem 6 of \cite{Sel}). Note that this error term can be replaced by $O(\log q^* \log\log q^*)$ if we assume that the zeros of $L(s,\chi)$ are simple.


As for complex characters $\chi$, we have by the definition of $Y_{\chi}$ that summing over the zeros of $L(s,\chi)$,
$$ \E[|Y_{\chi}|^4] = 4 \sum_{\gamma_1,\gamma_2,\gamma_3,\gamma_4\neq 0} \frac{\sqrt{m_{\gamma_1}m_{\gamma_2}m_{\gamma_3}m_{\gamma_4}}\E[Z_{\gamma_1}Z_{\gamma_2}\overline{Z_{\gamma_3}}\overline{Z_{\gamma_4}}]}{(\frac 12+\gamma_1^2)^{\frac 12}(\frac 12+\gamma_2^2)^{\frac 12}(\frac 12+\gamma_3^2)^{\frac 12}(\frac 12+\gamma_4^2)^{\frac 12}}.$$
Moreover, by independence of the $Z_{\gamma}$ and since $\E[Z_{\gamma}]=0$ and $|Z_{\gamma}|=1$, we have that
$$\E[Z_{\gamma_1}Z_{\gamma_2}\overline{Z_{\gamma_3}}\overline{Z_{\gamma_4}}] = \begin{cases}
1 & \text{ if } \gamma_1=\gamma_2=\gamma_3=\gamma_4, \\
0 & \text{ if exactly three of the } \gamma_i \text{ are equal}, \\
1 & \text{ if } \gamma_1=\gamma_3 \neq \gamma_2=\gamma_4 \text{ or } \gamma_1=\gamma_4 \neq \gamma_2=\gamma_3, \\
0 & \text{ if } \gamma_1=\gamma_2 \neq \gamma_3=\gamma_4, \\
0& \text{ otherwise.} \\
\end{cases}  $$
Hence,
\begin{equation}
 \E[|Y_{\chi}|^4] = 4 \sum_{\gamma_1 \neq 0} \frac{m_{\gamma_1}^2}{(\frac 12+\gamma_1^2)^{2}}+ 8\sum_{ \substack{\gamma_1, \gamma_2 \neq 0\\ \gamma_1\neq \gamma_2}} \frac{m_{\gamma_1}m_{\gamma_2}}{(\frac 12+\gamma_1^2)(\frac 12+\gamma_2^2)}=8 (\log q^*)^2 +O\left(z_{\chi}^2+\frac{(\log q^*)^2}{\log\log q^*}\right)
 \label{equation 4th moment complex chi}
\end{equation}
by Theorem 6 of \cite{Sel}, Lemma \ref{lemma b(chi)} and the Riemann-von Mangoldt Formula. Combining this with \eqref{equation 2 lemma first two moments} and the previous calculation of $\E[|Y_{\chi}|^2]$ in \eqref{equation first lemma variance} and \eqref{equation 1 lemma first two moments complex} we obtain that
\begin{align*}
 \V[H_q] &=\sum_{\substack{ \chi \bmod q \\ \chi \neq \chi_0 \\ \chi \text{ real}}} \left(3 (\log q^*)^2 - (\log q^*)^2 + O\left(z_{\chi}^2+\frac{(\log q^*)^2}{\log\log q^*}\right) \right)   \\
 &+ \sum_{\substack{\chi \in C(q) \\ \chi \text{ complex}}} \left( 8 (\log q^*)^2 - 4 (\log q^*)^2  + O\left(z_{\chi}^2+\frac{(\log q^*)^2}{\log\log q^*}\right)\right)\\
 &= \sum_{\substack{\chi\bmod q \\ \chi \neq \chi_0}} 2 (\log q^*)^2 + O\left(\phi(q)\frac{(\log q)^2}{\log\log q}\right),
\end{align*}
since 
$$ \sum_{\substack{\chi\bmod q \\ \chi \neq \chi_0}} z_{\chi}^2 \leq (\max_{\chi} z_{\chi}) \sum_{\substack{\chi\bmod q \\ \chi \neq \chi_0}} z_{\chi} \ll \frac{\log q}{\log\log q} \cdot \phi(q) $$
by Theorem 6 of \cite{Sel} and \eqref{equation central point}.
The result follows from Lemma \ref{lemma FiMa}.
\end{proof}

\section{Large deviations of $H_q$}

One would like to apply the existing results on large deviations such as the Montgomery-Odlyzko bounds \cite{MoOd} to our question. Expanding the square in \eqref{equation definition of H} we obtain
\begin{align*}
H_q&=4 \sum_{\substack{\chi \bmod q \\ \chi \neq \chi_0 \\ \chi \text{ real}}} \sum_{\gamma_{\chi}>0} \frac{m_{\gamma_{\chi}}\Re(Z_{\gamma_{\chi}})^2}{ \frac 14+\gamma_{\chi}^2} + 8 \sum_{\substack{\chi \bmod q \\ \chi \neq \chi_0 \\ \chi \text{ real}}} \sum_{\gamma_{\chi}>\lambda_{\chi}>0} \frac{ \sqrt{m_{\gamma_{\chi}}m_{\lambda_{\chi}}}\Re(Z_{\gamma_{\chi}}) \Re(Z_{\lambda_{\chi}})}{ \sqrt{\frac 14+\gamma_{\chi}^2} \sqrt{\frac 14+\lambda_{\chi}^2}}\\
&+2 \sum_{\substack{\chi \in C(q) \\ \chi \text{ complex}}} \sum_{\gamma_{\chi}\neq 0} \frac{m_{\gamma_{\chi}}|Z_{\gamma_{\chi}}|^2}{ \frac 14+\gamma_{\chi}^2} + 4 \sum_{\substack{\chi \in C(q) \\ \chi \text{ complex}}} \sum_{\substack{\gamma_{\chi},\lambda_{\chi} \neq 0 \\ \gamma_{\chi}>\lambda_{\chi}}} \frac{\sqrt{m_{\gamma_{\chi}}m_{\lambda_{\chi}}} Z_{\gamma_{\chi}} \overline{Z_{\lambda_{\chi}}}}{ \sqrt{\frac 14+\gamma_{\chi}^2} \sqrt{\frac 14+\lambda_{\chi}^2}}.
\end{align*} 
At this point we run into the problem that the random variables in this expression are not all mutually independent, hence this sum of random variables does not satisfy the hypotheses of classical theorems on large deviations. We will use an alternative approach based on bounds on the moment-generating function (Laplace transform) of $H_q$, which we will then transfer into bounds on large deviations of $H_q$.

While the moment-generating functions of $\Re(Y_{\chi})$ and $\Im(Y_{\chi})$ can be computed explicitly in terms of Bessel functions (see Lemma \ref{lemma moment generating of Y_chi}), we were not able to find such a nice closed formula for $H_q$. 
%
%
We begin this section with an effective Stirling Formula.

\begin{lemma}[Stirling's Formula]
We have for $n\geq 2$ that
$$ 2.506...=\sqrt{2\pi}<\frac{n!}{\sqrt n (n/e)^n} < \sqrt{2\pi}e^{\frac 1{24}}= 2.613...$$
\label{lemma Stirling}
\end{lemma}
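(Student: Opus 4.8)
The plan is to prove the two-sided bound on $n!/(\sqrt n (n/e)^n)$ by combining the Wallis-type product estimate with a monotonicity argument on the sequence $a_n := n!/(\sqrt n (n/e)^n)$. First I would write $\log a_n - \log a_{n+1} = (n+\tfrac12)\log\frac{n+1}{n} - 1$ and expand $\log\frac{n+1}{n} = \log(1+\tfrac1n)$ using the series $\log\frac{1+t}{1-t} = 2(t + t^3/3 + t^5/5 + \cdots)$ with $t = \frac{1}{2n+1}$, which gives
\begin{equation*}
\log a_n - \log a_{n+1} = \frac{1}{3(2n+1)^2} + \frac{1}{5(2n+1)^4} + \cdots > 0,
\end{equation*}
so $a_n$ is strictly decreasing. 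Bounding the right-hand side above by a geometric series with ratio $(2n+1)^{-2}$ shows $\log a_n - \log a_{n+1} < \frac{1}{3((2n+1)^2-1)} = \frac{1}{12}\left(\frac1n - \frac1{n+1}\right)$, so the sequence $b_n := \log a_n - \frac{1}{12n}$ is strictly increasing. Since $a_n \to \sqrt{2\pi}$ (Stirling's theorem, which I would either cite or derive from the Wallis product $\prod_{k\geq1}\frac{(2k)^2}{(2k-1)(2k+1)} = \frac\pi2$), both sequences converge to $\log\sqrt{2\pi}$.

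From monotonicity the two-sided bound is immediate: $a_n$ decreasing with limit $\sqrt{2\pi}$ forces $a_n > \sqrt{2\pi}$ for every $n$, giving the lower bound with no restriction on $n$. For the upper bound, $b_n = \log a_n - \frac{1}{12n}$ increasing to $\log\sqrt{2\pi}$ gives $\log a_n < \log\sqrt{2\pi} + \frac{1}{12n}$, i.e. $a_n < \sqrt{2\pi}\, e^{1/(12n)}$; evaluating at $n=2$ yields $a_n \leq a_2 < \sqrt{2\pi}\, e^{1/24}$ for all $n \geq 2$. I would then just check numerically that $\sqrt{2\pi} = 2.5066\ldots$ and $\sqrt{2\pi}\,e^{1/24} = 2.6133\ldots$ to confirm the stated decimal values, and note that the hypothesis $n \geq 2$ is exactly what is needed for the clean constant $e^{1/24}$ (the case $n=1$ would require $e^{1/12}$).

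The only real obstacle is supplying the value of the limit $\sqrt{2\pi}$ in a self-contained way; everything else is elementary series manipulation. If one wants to avoid quoting Stirling's theorem outright, the cleanest route is the Wallis product: from $a_n \to L$ for some $L > 0$ (which follows from monotonicity and the crude lower bound $a_n > 0$ together with boundedness, e.g. $a_n \leq a_2$), one computes $\frac{a_n^2}{a_{2n}} = \frac{(n!)^2 2^{2n}}{(2n)!\sqrt{2n}} \cdot \frac{1}{\sqrt{2}} \cdot \frac{1}{?}$ — more precisely one matches $\frac{a_n^4}{a_{2n}^2}$ against the Wallis quotient to extract $L = \sqrt{2\pi}$. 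Since this lemma is standard and stated only for later use, I would most likely just write ``this is a standard effective form of Stirling's formula; see, e.g., \cite{} '' or give the short monotonicity proof above in a couple of lines, as the precise constants $\sqrt{2\pi}$ and $\sqrt{2\pi}e^{1/24}$ are all that the rest of the paper consumes.
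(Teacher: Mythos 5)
Your proof is correct, and it is more self-contained than what the paper does: the paper's entire ``proof'' is a citation to formula 6.1.42 of Abramowitz--Stegun, which records exactly the bounds $\sqrt{2\pi}\,n^{n+1/2}e^{-n} < n! < \sqrt{2\pi}\,n^{n+1/2}e^{-n+1/(12n)}$, so the constant $e^{1/24}$ comes from setting $n\ge 2$ in $e^{1/(12n)}$, precisely as in your last step. Your monotonicity argument is the standard derivation of those bounds and is sound: the identity $\log a_n-\log a_{n+1}=(n+\tfrac12)\log\frac{n+1}{n}-1=\sum_{k\ge1}\frac{1}{(2k+1)(2n+1)^{2k}}$ gives $a_n$ strictly decreasing, the geometric-series majorization gives $\log a_n-\log a_{n+1}<\frac1{12}\bigl(\frac1n-\frac1{n+1}\bigr)$ and hence $b_n=\log a_n-\frac1{12n}$ strictly increasing, and identifying the common limit as $\log\sqrt{2\pi}$ via Wallis pins down both constants. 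The only soft spot is the Wallis step itself, which you leave as a sketch (the quotient you want is $a_n^2/a_{2n}$, which reduces to the Wallis product after simplification), but since you explicitly offer the alternative of citing the limit -- and the paper itself resolves the whole lemma by citation -- this is a presentational choice rather than a gap. In short: same inequality, but you supply the elementary monotonicity-plus-Wallis proof that the paper outsources to a reference, at the cost of needing the limit $\sqrt{2\pi}$ from somewhere.
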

\begin{proof}
See 6.1.42 of \cite{Ab}.
\end{proof}

\begin{lemma}
\label{lemma moments of Y_chi}
Assume GRH and let $Y_{\chi}$ be the random variable defined in \eqref{equation definition of Y_chi}. We have for $q^*$ large enough and for $n\geq 1$ that 
$$ \E[|Y_{\chi}|^{2n}] \leq 5.7 n^{\frac 32} \left( \frac {4n\log q^*} {e-o(1)} \right)^n. $$
\end{lemma}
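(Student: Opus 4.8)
The plan is to bound $\E[|Y_{\chi}|^{2n}]$ by extracting the $(2n)$-th coefficient of the moment-generating function from Lemma \ref{lemma moment generating of Y_chi} and using Cauchy's estimate together with a good bound on that generating function. Consider first the case of a real character $\chi$; the complex case is handled identically after noting that $|Y_{\chi}|^2 = \Re(Y_{\chi})^2 + \Im(Y_{\chi})^2$ with $\Re(Y_{\chi})$ and $\Im(Y_{\chi})$ having the same distribution as $\frac{1}{\sqrt 2}$ times a ``real-type'' $Y$, so that the moments of $|Y_{\chi}|^2$ differ only by bounded factors. Set $b_{\chi}:= \sum_{\gamma_{\chi}} m_{\gamma_{\chi}}/(\tfrac14+\gamma_{\chi}^2) = \log q^* + O(\log\log q^*)$ by Lemma \ref{lemma b(chi)}. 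Since $\mathcal L_{Y_{\chi}}(z) = \prod_{\gamma_{\chi}>0} I_0\big(2z\sqrt{m_{\gamma_{\chi}}/(\tfrac14+\gamma_{\chi}^2)}\big)$ is even and entire, we have
$$ \E[Y_{\chi}^{2n}] = (2n)! \cdot [z^{2n}] \prod_{\gamma_{\chi}>0} I_0\bigg(2z\sqrt{\tfrac{m_{\gamma_{\chi}}}{\tfrac14+\gamma_{\chi}^2}}\bigg), $$
and I would estimate this coefficient by Cauchy's formula on a circle $|z|=R$ to be optimized later.

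The key analytic input is an upper bound for $\prod_{\gamma_{\chi}>0} I_0\big(2|z|\sqrt{m_{\gamma_{\chi}}/(\tfrac14+\gamma_{\chi}^2)}\big)$ on $|z|=R$. Using the elementary inequality $I_0(t) \leq e^{t^2/4}$ (which follows termwise from comparing $1/n!^2 \leq 1/(n!\, 2^n \cdot n!/2^n)$... more cleanly: $I_0(t) = \sum t^{2n}/(4^n n!^2) \leq \sum (t^2/4)^n/n! = e^{t^2/4}$), each factor is at most $\exp\big(R^2 m_{\gamma_{\chi}}/(\tfrac14+\gamma_{\chi}^2)\big)$, so the whole product is at most $\exp(R^2 b_{\chi})$. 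Hence Cauchy gives
$$ \E[Y_{\chi}^{2n}] \leq (2n)!\, \frac{\exp(R^2 b_{\chi})}{R^{2n}} $$
for every $R>0$. Choosing $R^2 = n/b_{\chi}$ to balance the two factors yields $\E[Y_{\chi}^{2n}] \leq (2n)!\, (b_{\chi}/n)^n e^{n} = (2n)!\, (e b_{\chi}/n)^n$. Now I apply the effective Stirling bound of Lemma \ref{lemma Stirling} to $(2n)!$: we have $(2n)! < \sqrt{2\pi}\, e^{1/24}\sqrt{2n}\,(2n/e)^{2n}$, and $(2n/e)^{2n}(e b_{\chi}/n)^n = (4n/e)^n b_{\chi}^n \cdot$ (one more factor of $(n/e)^n$ coming out); tracking this carefully gives $\E[Y_{\chi}^{2n}] \leq C\sqrt{n}\,\big(4n b_{\chi}/e\big)^n$ for an explicit constant. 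Then I substitute $b_{\chi} = \log q^*(1 + o(1))$ (valid since $q^*$ is large, with the $o(1)$ absorbing the $\log\log q^*/\log q^*$ term, which is why the statement has $e-o(1)$ rather than $e$), and adjust the numerical constant to $5.7$ and the power of $n$ to $n^{3/2}$ to leave room for the complex case, where an extra combinatorial factor of at most $2^n$ or a convexity loss from $|Y_{\chi}|^2 = \Re(Y_{\chi})^2+\Im(Y_{\chi})^2$ appears (using $(a+b)^n \leq 2^n(a^n+b^n)$... here one must be slightly careful, but the binomial expansion of $\E[(\Re^2+\Im^2)^n]$ over independent identically distributed pieces costs at most a factor absorbed into the constant and a single extra power of $n$).

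The main obstacle I anticipate is purely bookkeeping: getting the numerical constant down to $5.7$ and the exponent of $n$ down to $3/2$ while honestly carrying the $o(1)$ in $e - o(1)$, and in particular handling the complex-character case without losing a factor exponential in $n$. The naive route ($|Y_{\chi}|^{2n} = (\Re(Y_{\chi})^2+\Im(Y_{\chi})^2)^n \leq 2^{n-1}(\Re(Y_{\chi})^{2n}+\Im(Y_{\chi})^{2n})$) would replace $\log q^*$ by $2\log q^*$, which is too lossy; instead one should directly differentiate the product $\prod I_0(\cdots)^2$ — equivalently note that for complex $\chi$, $\E[e^{u\Re(Y_{\chi})+v\Im(Y_{\chi})}]$ factors and $\E[|Y_{\chi}|^{2n}]$ equals $(2n)!$ times a coefficient of $\prod_{\gamma_{\chi}\neq 0} I_0(2z\sqrt{m_{\gamma_{\chi}}/(\tfrac14+\gamma_{\chi}^2)})^2$-type generating function with the same $\exp(R^2 b_{\chi})$ bound on $|z| = R$ (the exponent doubles but so does the relevant sum-over-zeros-both-signs, matching up). With the right normalization the constant $b_{\chi}$ that appears is exactly $\log q^* + O(\log\log q^*)$ in both cases, and the rest is a one-parameter optimization plus Stirling. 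I would close by remarking that the same Cauchy-plus-$\exp(R^2 b_{\chi})$ bound is what later yields analyticity of $\mathcal L_{H_q}$ in a disc of radius $\asymp 1/\log q$.
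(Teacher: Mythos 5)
Your treatment of the real-character case is essentially the paper's: bound $I_0(t)\le e^{t^2/4}$, deduce $|\mathcal L_{Y_\chi}(z)|\le \exp(R^2\cdot(\text{sum over zeros}))$ on $|z|=R$, extract $\E[Y_\chi^{2n}]$ by Cauchy's formula, optimize $R\asymp(n/\log q^*)^{1/2}$, and apply Stirling. But you give away a factor of $2$ at the very first step: the product in \eqref{equation moment generating of Y_chi} runs only over $\gamma_\chi>0$, so the exponent is $R^2\sum_{\gamma_\chi>0}m_{\gamma_\chi}/(\tfrac14+\gamma_\chi^2)\le \tfrac{R^2}{2}(\log q^*+O(\log\log q^*))$, not $R^2 b_\chi$ with $b_\chi$ the full sum of Lemma \ref{lemma b(chi)}. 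The paper keeps this halving and thus gets the per-component bound $\E[Y_\chi^{2n}]\le 3.7\,n^{1/2}\left(\frac{2n\log q^*}{e-o(1)}\right)^n$ (and the same for $\Re(Y_\chi)$, $\Im(Y_\chi)$ when $\chi$ is complex, since there the relevant sum over $\gamma_\chi\neq 0$ is again $\log q^*+O(\log\log q^*)$ but with the $\sqrt2$ normalization). With base $2$ per component in hand, the complex case is routine: the paper expands $\E[(\Re(Y_\chi)^2+\Im(Y_\chi)^2)^n]$ binomially, applies Cauchy--Schwarz ($2k\mapsto 4k$ in the exponents, which is precisely where the $4$ in the lemma comes from), and uses Stirling on $\binom nk$ so that $k^k(n-k)^{n-k}\binom nk\le n^n$ up to bounded factors; the sum over $k$ costs one extra power of $n$, giving $5.7\,n^{3/2}(4n\log q^*/(e-o(1)))^n$. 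Even the ``naive'' route you reject, $(a+b)^n\le 2^{n-1}(a^n+b^n)$, lands inside the stated bound once you have base $2$ per component; your worry that it is ``too lossy'' is an artifact of your own factor-of-$2$ loss, which turned your base into $4$ already for a single component.

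This matters because the fix you propose for the complex case is, as stated, not correct: there is no identity expressing $\E[|Y_\chi|^{2n}]$ as $(2n)!$ times the $z^{2n}$-coefficient of a ``$\prod I_0(\cdots)^2$-type'' generating function, and your heuristic that ``the exponent doubles but so does the sum over zeros of both signs'' has no doubling to draw on, since for complex $\chi$ the moment-generating function of $\Re(Y_\chi)$ in \eqref{equation moment generating of Y_chi complex} already involves the zeros of both signs. (A correct radial identity does exist by rotation invariance of $Y_\chi$, namely $\sum_{n\ge 0}\E[|Y_\chi|^{2n}]\,z^{2n}/(4^n n!^2)=\mathcal L_{\Re(Y_\chi)}(z)$, and it would even yield a slightly better bound, but you neither state nor prove it.) So the gap is concrete: recover the factor $\tfrac12$ in the exponential bound for real characters and for the components of complex characters, and then close the complex case either by the paper's binomial-plus-Cauchy--Schwarz argument or by an honest version of a radial generating-function identity; as written, the complex-character case — which is exactly where the $4$ and the $n^{3/2}$ in the statement originate — is not established.
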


\begin{proof}

We will use the explicit formula for the moment-generating functions of $\Re(Y_{\chi})$ and $\Im(Y_{\chi})$ appearing in Lemma \ref{lemma moment generating of Y_chi}.
Note that $I_0(z)$ is an entire function of $z$,
and so the absolutely convergent product \eqref{equation moment generating of Y_chi} is also an entire function. The Taylor series of $I_0(z)$ gives the following immediate bound:
$$ |I_0(z)| \leq  \sum_{n=0}^{\infty} \frac 1{n!^2} \left( \frac {|z|}2\right)^{2n} \leq  \sum_{n=0}^{\infty} \frac 1{n!} \left( \frac {|z|^2}4\right)^{n} = e^{|z|^2/4}.  $$
For real characters $\chi$, this gives a bound on $\mathcal L_{Y_{\chi}}(z)$, since by \eqref{equation moment generating of Y_chi},
$$|\mathcal L_{Y_{\chi}}(z)| \leq  \exp\left(\sum_{\gamma_{\chi}>0} \frac{|z|^2m_{\gamma_{\chi}}}{\frac 14+\gamma_{\chi}^2} \right)\leq \exp\left(\frac{|z|^2}2 (\log q^*+O(\log\log q^*))\right) $$
for $q^*$ large enough, by Lemma \ref{lemma b(chi)}. In the last equation the sums over $\gamma_{\chi}$ are counted without multiplicity, and $m_{\gamma_{\chi}}$ denotes the multiplicity of $\rho_{\chi} = \frac 12+i\gamma_{\chi}$. We now use this to bound the moments of $Y_{\chi}$. Cauchy's formula for the derivatives reads
$$  \E[Y_{\chi}^{2n}]  = \frac{(2n)!}{2\pi i} \int_{|z|=C} \mathcal L_{Y_{\chi}}(z) \frac{dz}{z^{2n+1}},  $$
and so by our bound on $\mathcal L_{Y_{\chi}} (z)$,
$$ |\E[Y_{\chi}^{2n}]| \leq (2n)! \exp \left(\frac{C^2 \log q^*}{2-o(1)}\right) C^{-2n}. $$
Taking $n\geq 1$ and $C=(2n/\log q^*)^{\frac 12}$, we obtain
$$ \E[Y_{\chi}^{2n}] \leq (2n)! \left( \frac{2n(1+o(1))}{e\log q^*}\right)^{-n}, $$
which by applying Lemma \ref{lemma Stirling} gives the bound
$$ \E[Y_{\chi}^{2n}] \leq3.7 n^{\frac 12} \left( \frac {2n\log q^*} {e-o(1)} \right)^n. $$

If $\chi$ is complex, then we apply the above argument to the moment-generating function of $\Re(Y_{\chi})$ and $\Im(Y_{\chi})$ (see \eqref{equation moment generating of Y_chi complex}). Doing so, we obtain the following bound:
$$ \E[\Re(Y_{\chi})^{2n}],  \E[\Im(Y_{\chi})^{2n}]\leq 3.7 n^{\frac 12} \left( \frac {2n\log q^*} {e-o(1)} \right)^n. $$
We finish the proof by combining this with Lemma \ref{lemma Stirling} and the Cauchy-Schwartz inequality in the form $|\E[XY]| \leq \E[X^2]^{\frac 12}\E[Y^2]^{\frac 12}$ for real random variables $X, Y$:
\begin{align*}
&\E[|Y_{\chi}|^{2n}]= \E[( \Re(Y_{\chi})^2+\Im(Y_{\chi})^2)^{n}] = \sum_{k=0}^n \binom nk \E[\Re(Y_{\chi})^{2k}\Im(Y_{\chi})^{2(n-k)}] \\
&\leq \sum_{k=0}^n \binom nk \E[\Re(Y_{\chi})^{4k}]^{\frac 12} \E[\Im(Y_{\chi})^{4(n-k)}]^{\frac 12} \\
& \leq 5.7 \sum_{k=1}^{n-1}  \frac{(n/e)^n n^{\frac 12}}{ (k/e)^k k^{\frac 12} ((n-k)/e)^{n-k} (n-k)^{\frac 12}} k^{\frac 14} \left( \frac {4k\log q^*} {e-o(1)} \right)^k (n-k)^{\frac 14} \left( \frac {4(n-k)\log q^*} {e-o(1)} \right)^{n-k}\\
&\hspace{1cm} + 2\cdot 3.7^{\frac 12} (2n)^{\frac 14} \left( \frac {4n\log q^*} {e-o(1)} \right)^n \\
&\leq 5.7 \sum_{k=1}^n  n^{\frac 12}\left( \frac {4n\log q^*} {e-o(1)} \right)^{n} =5.7  n^{\frac 32}\left( \frac {4n\log q^*} {e-o(1)} \right)^{n} .
\end{align*}  

\end{proof}

\begin{remark}
An important fact used in the last proof is that $\mathcal L_{\Re(Y_{\chi})}(z)$ and $\mathcal L_{\Im(Y_{\chi})}(z)$ are entire functions, which we integrated on the circle $|z|=(2n/\log q^*)^{\frac 12}$, whose radius tends to infinity with $n$. This would not have worked with the cumulant-generating function $\log \E[e^{iz\Re(Y_{\chi})}]$, which has poles by \eqref{equation moment generating of Y_chi} since $I_0(z)$ has infinitely many zeros on the imaginary axis.  
\end{remark}

Now that we have bounded the moments of $|Y_{\chi}|^2$, we will turn this information into a bound on $\mathcal L_{H_q}(z)$, the moment-generating function of $H_q$. Instead of studying the moments $H_q$ itself, we will study its centered moments, by defining
$$ \tilde{H}_q := H_q-\E[H_q] = \sum_{\substack{\chi\bmod q \\ \chi \neq \chi_0}} W_{\chi}, $$
where (see \eqref{equation definition of H})
\begin{equation}
 W_{\chi} := |Y_{\chi}|^2-\E[|Y_{\chi}|^2]. 
 \label{equation definition of W_chi}
\end{equation}
\begin{lemma}
\label{lemma bound moment generating of H}
Assume GRH. For $q$ large enough, the moment-generating function of $\tilde H_q$ satisfies, in the range $|t|<(40\log q)^{-1}$, 
\begin{equation}
\mathcal L_{\tilde H_q}(t) \leq (1+184t^2(\log q)^2)^{\phi(q)}.
\label{equation borne sur L H_q}
\end{equation} 
\end{lemma}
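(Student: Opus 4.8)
The plan is to bound $\mathcal L_{\tilde H_q}(t) = \prod_{\chi \in C(q)} \mathcal L_{W_\chi}(t)$ by bounding each factor $\mathcal L_{W_\chi}(t) = \E[e^{t W_\chi}]$ separately, using the fact that the $W_\chi$ are independent (this is the independence of the collection $\{|Y_\chi|^2 : \chi \in C(q)\}$ noted after Definition \ref{definition H_q}). Since $W_\chi$ is centered, I expand the exponential as a power series: $\mathcal L_{W_\chi}(t) = 1 + \sum_{n \geq 2} \frac{t^n}{n!}\E[W_\chi^n]$, so it suffices to bound the central moments $\E[W_\chi^n]$. First I would control these by the raw moments: since $W_\chi = |Y_\chi|^2 - \E[|Y_\chi|^2]$ and $\E[|Y_\chi|^2] \asymp \log q^*$ by \eqref{equation first lemma variance}, \eqref{equation 1 lemma first two moments} and Lemma \ref{lemma b(chi)}, the binomial theorem together with Lemma \ref{lemma moments of Y_chi} gives $\E[|W_\chi|^n] \leq \sum_{k=0}^n \binom nk \E[|Y_\chi|^{2k}] \E[|Y_\chi|^2]^{n-k} \ll n^{3/2}(C n \log q^*)^n$ for an absolute constant $C$ — the point being that the dominant contribution is the top term $\E[|Y_\chi|^{2n}]$, and the lower-order terms are geometrically dominated. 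Using Stirling (Lemma \ref{lemma Stirling}) to write $n! \gg \sqrt n (n/e)^n$, this yields $\frac{|t|^n}{n!}\E[|W_\chi|^n] \leq n(C'|t|\log q^*)^n$ for an absolute constant $C'$.

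Next I would sum the tail: in the range $|t| < (40\log q)^{-1}$ we have $C'|t|\log q^* \leq C'|t|\log q \leq C'/40$, which is a small absolute constant strictly less than $1$, so $\sum_{n \geq 2} n (C'|t|\log q^*)^n$ converges and is bounded by a constant multiple of the $n=2$ term, namely $\leq A (|t|\log q^*)^2 \leq A(|t|\log q)^2$ for an absolute constant $A$. Tracking the constants carefully (starting from $5.7$ in Lemma \ref{lemma moments of Y_chi} and the Stirling constants) should land the bound $\mathcal L_{W_\chi}(t) \leq 1 + 184 t^2(\log q)^2$. Taking the product over the $\phi(q)/(\text{something})$-or-fewer characters in $C(q)$ — and here one only needs $|C(q)| \leq \phi(q)$ — gives $\mathcal L_{\tilde H_q}(t) = \prod_{\chi \in C(q)}\mathcal L_{W_\chi}(t) \leq (1 + 184 t^2(\log q)^2)^{|C(q)|} \leq (1 + 184 t^2(\log q)^2)^{\phi(q)}$, since each factor exceeds $1$.

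The main obstacle is bookkeeping the absolute constants so that the final constant comes out as $184$ (or at least some explicit absolute constant — the precise value $184$ is not important for the subsequent large-deviation argument, only that it is absolute and that the radius $(40\log q)^{-1}$ is of the right shape). In particular one must be careful that $\E[W_\chi^n]$ for $n$ odd does not cause sign issues — this is handled by bounding $\E[W_\chi^n] \leq \E[|W_\chi|^n]$ throughout — and that the replacement of $\log q^*$ by $\log q$ is harmless since $q^* \mid q$ so $\log q^* \leq \log q$. One also needs to confirm that $C(q)$, being a set containing one of each conjugate pair of complex characters plus the real ones, has size at most $\phi(q)$; this is immediate. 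A secondary, purely cosmetic point is that the $o(1)$ appearing inside Lemma \ref{lemma moments of Y_chi} (coming from the $O(\log\log q^*)$ correction in Lemma \ref{lemma b(chi)}) is absorbed into the slack between $C'/40$ and $1$, which is why the lemma is stated only for $q$ large enough.
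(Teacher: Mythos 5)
Your proposal is correct and follows essentially the same route as the paper: factor $\mathcal L_{\tilde H_q}(t)$ using the independence of the $W_{\chi}$, bound each factor via the moment bounds of Lemma \ref{lemma moments of Y_chi} together with Stirling's formula (Lemma \ref{lemma Stirling}), sum the resulting series, which is geometrically convergent for $|t|<(40\log q)^{-1}$, and finally use $|C(q)|\leq \phi(q)$ with each factor at least $1$. The only cosmetic difference is the treatment of the centering: the paper writes $\mathcal L_{W_{\chi}}(t)=e^{-t\E[|Y_{\chi}|^2]}\mathcal L_{|Y_{\chi}|^2}(t)$ and majorizes the exponential prefactor by a quadratic polynomial, whereas you expand $\E[e^{tW_{\chi}}]$ directly (the linear term vanishing since $\E[W_{\chi}]=0$) and bound the central moments by raw moments via the binomial theorem; carried out carefully, both yield an absolute constant of the stated size in the stated range of $t$.
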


\begin{proof}

Let $0\leq t < (40\log q)^{-1} $. Using the identity $\mathcal L_{X+c}(z) = e^{zc} \mathcal L_X(z)$, we have for $q$ large enough and $W_{\chi}$ defined as in \eqref{equation definition of W_chi} that
\begin{align}
\begin{split}
\mathcal L_{W_{\chi}}(t) &= e^{-t\E[|Y_{\chi}|^2]} \mathcal L_{|Y_{\chi}|^2}(t) \\
& = e^{-t\E[|Y_{\chi}|^2]}\sum_{n=0}^{\infty}\E[|Y_{\chi}|^{2n}]  \frac{t^n}{n!} \\
&  \leq (1-t\E[|Y_{\chi}|^2] +0.561 t^2 \E[|Y_{\chi}|^2]^2) \sum_{n=0}^{\infty}\E[|Y_{\chi}|^{2n}]  \frac{t^n}{n!}  \\
&= 1+\sum_{n=2}^{\infty} t^n \left( \frac{ \E[|Y_{\chi}|^{2n}]}{n!} - \frac{\E[|Y_{\chi}|^2]\E[|Y_{\chi}|^{2n-2}]}{(n-1)!} +0.561  \frac{\E[|Y_{\chi}|^2]^2\E[|Y_{\chi}|^{2n-4}]}{(n-2)!}  \right) .
\end{split}
\label{equation bound L W chi}
\end{align}  

Now, by Lemma \ref{lemma moments of Y_chi}, we have for $n\geq 1$ that
$$ \E[|Y_{\chi}|^{2n}] \leq 5.7 n^{\frac 32} \left( \frac{4n \log q} {e-o(1)}\right)^{n  }. $$
Hence, applying Lemma \ref{lemma Stirling},  we obtain
\begin{align*}
\mathcal L_{W_{\chi}}(t) &\leq 1+ 4.65\sum_{n=2}^{\infty} n^{\frac 32} \left( \frac{4n \log q} {e-o(1)}\right)^{n}t^n \left( \frac en\right)^n n^{-\frac 12} \\
& =  1+ 4.65 \sum_{n=2}^{\infty}  n((4+o(1))t\log q)^n \\
&\leq 1+ 184 t^2(\log q)^2, 
\end{align*}
as long as $0\leq t<(40\log q)^{-1}$. Since the $W_{\chi}$ are all mutually independent, the proof follows by multiplicativity:
$$ \mathcal L_{\tilde H_q}(t)  = \prod_{\substack{\chi\bmod q \\ \chi \neq \chi_0}} \mathcal L_{W_{\chi}}(t).  $$
A similar argument works in the range $-(40\log q)^{-1} < t <0$.
%
%

%
%

\end{proof}
%
%
%

%
%
%
%
%

\begin{proof}[Proof of Theorem \ref{theorem large deviations}, upper bound]

%

The first estimate for $\E[H_q]$ appearing in Theorem \ref{theorem first two moments} implies that under GRH, for $q$ large enough and for $(\log\log q)^2/\log q \leq \epsilon \leq 1$ (recall $\tilde H_q = H_q-\E[H_q]$) we have
$$ \P[H_q>(1+\epsilon) \phi(q) \log q] \leq \P[\tilde H_q>0.99\epsilon \phi(q) \log q]. $$

As is customary (this is Chernoff's inequality), we relate the large deviations of $\tilde H_q$ to its moment-generating function using Markov's inequality:
$$ \P[\tilde H_q>V] = \P[e^{t\tilde H_q}>e^{tV}] \leq e^{-tV} \E[e^{t\tilde H_q}]. $$
Taking $V=0.99\epsilon \phi(q) \log q$ we obtain that
$$ \P[H_q>(1+\epsilon) \phi(q) \log q] \leq \exp(-0.99t \epsilon \phi(q) \log q) \mathcal L_{\tilde H_q}(t), $$
which from Lemma \ref{lemma bound moment generating of H} is, for $t=\epsilon/(370\log q)$ (we use that $\epsilon\leq 9$), 
\begin{align*}
&\leq \exp(-0.99\epsilon^2 \phi(q)/370) (1+184\epsilon^2/370^2)^{\phi(q)} \\ &= \exp[\phi(q) (-0.99\epsilon^{2}/370+\log (1+184\epsilon^2/370^2))] \\ &\leq \exp(-\epsilon^2\phi(q)/751),
\end{align*} 
since for $x\geq 0$, $\log(1+x)\leq x$.
We have therefore established the bound 
$$  \P[H_q>(1+\epsilon) \phi(q) \log q] \leq \exp(-\epsilon^2\phi(q)/751). $$
We conclude the proof by writing
$$ \P[\tilde H_q < -V] = \P[-\tilde H_q > V]= \P[e^{-t\tilde H_q} > e^{tV} ] \leq e^{-tV} \E[e^{-t \tilde H_q}], $$
and by applying the same reasoning to $\mathcal L_{\tilde H_q} (-t)$, from which we deduce that
$$  \P[H_q<(1-\epsilon) \phi(q) \log q] \leq \exp(-\epsilon^2\phi(q)/751). $$
\end{proof}

For the lower bound we will use the following inequality.
\begin{lemma}[Paley-Zygmund Inequality]
If $X\geq 0$ is a random variable having a second moment, then for any $0<a<1$ we have
$$ \P[X\geq a\E[X]] \geq (1-a)^2 \E[X]^2/\E[X^2].  $$
\label{lemma lower bound}
\end{lemma}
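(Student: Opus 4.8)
The plan is to prove this classical inequality by splitting the first moment $\E[X]$ according to whether $X$ falls below or above the threshold $a\E[X]$, and then estimating the two pieces in the only way available: the small piece trivially and the large piece via Cauchy--Schwarz. We may assume $\E[X]>0$ (and hence $\E[X^2]>0$), since otherwise $X=0$ almost surely and the claimed inequality is vacuous.

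First I would write
$$ \E[X] = \E\big[X\,\mathbf 1_{\{X< a\E[X]\}}\big] + \E\big[X\,\mathbf 1_{\{X\geq a\E[X]\}}\big]. $$
On the event $\{X< a\E[X]\}$ we have $0\leq X< a\E[X]$, so the first term is at most $a\E[X]$. Rearranging gives
$$ (1-a)\E[X] \leq \E\big[X\,\mathbf 1_{\{X\geq a\E[X]\}}\big]. $$
Next I would bound the right-hand side using the Cauchy--Schwarz inequality in the form $\E[UV]\leq \E[U^2]^{1/2}\E[V^2]^{1/2}$ applied with $U=X$ and $V=\mathbf 1_{\{X\geq a\E[X]\}}$, which, since $V^2=V$, yields
$$ \E\big[X\,\mathbf 1_{\{X\geq a\E[X]\}}\big] \leq \E[X^2]^{\frac 12}\,\P[X\geq a\E[X]]^{\frac 12}. $$
Combining the last two displays, both sides being nonnegative, and squaring gives $(1-a)^2\E[X]^2 \leq \E[X^2]\,\P[X\geq a\E[X]]$, which is exactly the asserted bound after dividing by $\E[X^2]$.

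There is no real obstacle here: the only point requiring a word of care is the degenerate case $\E[X]=0$ handled at the outset, and the observation that one must keep both sides nonnegative before squaring, which holds because $0<a<1$ and $X\geq 0$ force $(1-a)\E[X]\geq 0$.
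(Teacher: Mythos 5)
Your proof is correct and is essentially the paper's own argument: split $\E[X]$ at the threshold (the paper uses a general cutoff $U$ and then sets $U=a\E[X]$), bound the lower piece trivially, and apply Cauchy--Schwarz with the indicator $\mathbf 1_{X\geq U}$ using $\mathbf 1^2=\mathbf 1$. The remarks about the degenerate case $\E[X]=0$ and nonnegativity before squaring are fine but add nothing beyond the paper's version.
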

\begin{proof}
Let $I\subset \mathbb R$ be an interval and define the random variable
$\mathbf 1_{X \in I}$ as follows: 
$$\mathbf 1_{X \in I} := \begin{cases}
1 &\text{ if } X \in I\\
0 & \text{ otherwise,}
\end{cases} $$
so that $\E[\mathbf 1_{X \in I}]=\P[X\in I]$. Using this notation we have for any $U > 0$ that
 \begin{align*}
 \E[X] =\E[X\mathbf 1_{X < U} ] +  \E[X\mathbf 1_{X \geq U} ] \leq U + \E[X^2]^{\frac 12} \P[X\geq U]^{\frac 12}
 \end{align*}
by the Cauchy-Schwartz inequality and the fact that $\mathbf 1_{X \in I}^2 =\mathbf 1_{X \in I}$. The proof follows by taking $U=a\E[X]$.
 
\end{proof}

\begin{proof}[Proof of Theorem \ref{theorem large deviations}, lower bound]
By Lemma \ref{lemma lower bound}, we have that if $t$ and $V$ are such that
$ e^{tV} \leq \E[e^{t\tilde H_q}]/2 $, then
$$ \P[\tilde H_q \geq V] = \P[e^{t\tilde H_q} \geq e^{tV}] \geq \frac 14 \frac{\E[e^{t \tilde H_q}]^2}{\E[e^{2t\tilde H_q}]}. $$
Taking $V=(1+\epsilon) \phi(q) \log q-\E[H_q]$, we need to select $t$ for which $ e^{tV} \leq \E[e^{t\tilde H_q}]/2 $. We start with $\chi$ real. By lemmas \ref{lemma Stirling} and \ref{lemma moments of Y_chi}, we have that in the range $0<t<(100\log q^*)^{-1}$, 
\begin{align*}
\E[e^{t |Y_{\chi}|^2}] &= 1+ t \E[|Y_{\chi}|^2] + \frac{t^2}2 \E[|Y_{\chi}|^4] + \overline{O}\left( 5.7 \sum_{n\geq 3} n^{\frac 32} \left(\frac{4n\log q^*}{e-o(1)}\right)^n\frac{|t|^n}{n!}  \right) \\
&= 1+ t \E[|Y_{\chi}|^2] + \frac{t^2}2 \E[|Y_{\chi}|^4] + \overline{O}\left( 2.28\frac {3(4|t|\log q^*)^3}{(1-4|t|\log q^*)^2} \right) \\
&= 1+ t \E[|Y_{\chi}|^2] +\frac{t^2}2 \E[|Y_{\chi}|^4] +  \overline{O}\left(475   (|t|\log q^*)^3 \right), 
\end{align*} 
where $\overline O$ means that the implied constant is one. Therefore, in this range of $t$ and for $q$ large enough, one shows using the estimates $\E[|Y_{\chi}|^2]\sim \log q^*$ and $\E[|Y_{\chi}|^4] \sim 3(\log q^*)^2$ obtained in the proof of Theorem \ref{theorem first two moments} (see \eqref{equation 1 lemma first two moments} and \eqref{equation fourth moment of Y_chi}) that for $0<t <(2100\log q^*)^{-1}$,
$$ \E[e^{t W_{\chi}}] = e^{-t\E[|Y_{\chi}|^2]}  \E[e^{t |Y_{\chi}|^2}] = 1+\frac{t^2}2(\E[|Y_{\chi}|^4] - \E[|Y_{\chi}|^2]^2)+ \overline O\left( 481 (|t|\log q^*)^3 \right). $$
Hence,
for $q^*$ large enough and for $0<t <(2100\log q^*)^{-1}$,
$$\E[e^{t W_{\chi}}]  \geq  e^{ 0.77 t^2 (\log q^*)^2}. $$
For complex $\chi$, we obtain a similar estimate using \eqref{equation 1 lemma first two moments complex} and \eqref{equation 4th moment complex chi}, with the constant $0.77$ replaced with $1.54$. 
  This shows that
\begin{align}
\E[e^{t\tilde H_q}]  = \prod_{\chi \in C(q) } \E[e^{t W_{\chi}}]   \geq \exp\bigg( 0.77 t^2 \sum_{\substack{\chi\bmod q \\ \chi \neq \chi_0}}(\log q^*)^2\bigg) &\geq e^{0.75t^2\phi(q) (\log q)^2} \label{equation lower bound on moment generating} \\ & \geq 2 e^{tV}, \notag 
\end{align} 
for $q$ large enough, $(\log\log q)^2/\log q < \epsilon < 3000^{-1}$
and $ 1.4 \epsilon / \log q \leq t <(2100 \log q^*)^{-1}$, since Theorem \ref{theorem first two moments} shows that in this range,
$$V=(1+\epsilon) \phi(q) \log q-\E[H_q] \sim \epsilon \phi(q)\log q .$$ We conclude by Lemma \ref{lemma bound moment generating of H} and \eqref{equation lower bound on moment generating} that
$$ \P[\tilde H_q \geq V] \geq \frac 14 \frac{\E[e^{t \tilde H_q}]^2}{\E[e^{2t\tilde H_q}]} \geq \frac 14 e^{1.5t^2 \phi(q)(\log q)^2}(1+184(2t\log q)^2)^{-\phi(q)} . $$
Taking $t= 1.4\epsilon/\log q $ gives the result.

\end{proof}

\section{Concluding Remarks}

Going from Theorem \ref{theorem large deviations} to Conjecture \ref{conjecture Hooley in large range} is not direct. Indeed, if we are studying the quantity $V(x;q)$ for $q$ and $x$ in a given range such as $(\log\log x)^{1+\delta} <q \leq x^{o(1)}$, then it is not clear that the limiting logarithmic distribution of $V(x;q)$ coincides with that of $H_q$. 
Indeed one would need to show that in the range $(\log Y)^{1+\delta} < q \leq e^{o(Y)}$, we have for every fixed $m\geq 1$ that
\begin{equation}
 \frac 1Y\int_0^Y (\phi(q)e^{-y}V(e^y;q))^m dy \sim \E[H_q^m].
 \label{equation mth moment conjecture}
\end{equation}  
This last integral is similar to a $2m$-correlation sum of low-lying zeros of Dirichlet $L$-functions. Indeed, expanding the $m$-th power we obtain from \eqref{equation expression for V(x;q)} under GRH that in the range $(\log Y)^{1+\delta} < q \leq e^{o(Y)}$,
\begin{align}
\begin{split}
&\int_0^Y (\phi(q) e^{-y}V(e^y;q)-o(1))^m dy= \int_0^Y \left(\sum_{\substack{\chi\bmod q \\ \chi \neq \chi_0}}  \sum_{\gamma_{\chi},\gamma_{\chi}'}\frac{e^{i(\gamma_{\chi}-\gamma_{\chi}') y } }{(\frac 12+i\gamma_{\chi})(\frac 12-i\gamma_{\chi}')} \right)^m dy \\
&=  \sum_{\chi_1,...,\chi_m \neq \chi_0} \sum_{\gamma_{\chi_1},\gamma_{\chi_1}',...,\gamma_{\chi_m},\gamma_{\chi_m}'} \frac{ \int_0^Y e^{i(\gamma_{\chi_1}+...+\gamma_{\chi_m}-\gamma_{\chi_1}'-...-\gamma'_{\chi_m}) y } dy  }{(\frac 12+i\gamma_{\chi_1})(\frac 12-i\gamma_{\chi_1}') \cdots (\frac 12+i\gamma_{\chi_m})(\frac 12-i\gamma_{\chi_m}') } \\
&=  Y\sum_{\chi_1,...,\chi_m \neq \chi_0} \sum_{\substack{\gamma_{\chi_1},\gamma_{\chi_1}',...,\gamma_{\chi_m},\gamma_{\chi_m}' \\ \gamma_{\chi_1}-\gamma_{\chi_1}'+...,+\gamma_{\chi_m}-\gamma_{\chi_m}'=0 }} \frac 1{(\frac 12+i\gamma_{\chi_1})(\frac 12-i\gamma_{\chi_1}') \cdots (\frac 12+i\gamma_{\chi_m})(\frac 12-i\gamma_{\chi_m}') } 
\\& 
+  \sum_{\substack{\chi_1,...,\chi_m \neq \chi_0 \\ \gamma_{\chi_1},\gamma_{\chi_1}',...,\gamma_{\chi_m},\gamma_{\chi_m}' \\ \gamma_{\chi_1}-\gamma_{\chi_1}'+...+\gamma_{\chi_m}-\gamma_{\chi_m}' \neq 0}} 
\frac { (e^{i(\gamma_{\chi_1}+...+\gamma_{\chi_m}-\gamma_{\chi_1}'-...-\gamma'_{\chi_m}) Y }-1)(\gamma_{\chi_1}+...+\gamma_{\chi_m}-\gamma_{\chi_1}'-...-\gamma'_{\chi_m})^{-1} }{(\frac 12+i\gamma_{\chi_1})(\frac 12-i\gamma_{\chi_1}') \cdots (\frac 12+i\gamma_{\chi_m})(\frac 12-i\gamma_{\chi_m}') }  .
\end{split}
\label{equation mth moment}
\end{align} 
If the last sum was running over the zeros of a single $L$-function, then we would run into the problem that if two zeros $\gamma,\gamma'$ are extremely close to each other, then $e^{i(\gamma-\gamma')Y}(\gamma-\gamma')$ is very close to $Y$, giving a significant contribution to \eqref{equation mth moment}. However in the present situation we are taking an average over all Dirichlet $L$-functions modulo $q$, and hence the number of pairs of such zeros will be negligible compared to the size of the family we average over, under assumptions on statistics on zeros of Dirichlet $L$-functions.  


We now show how a conjecture on the pair correlation of low-lying zeros of Dirichlet $L$-functions implies that \eqref{equation mth moment conjecture} holds for $m=1$ in the range $(\log\log x)^{1+\delta} \leq q \leq x^{o(1)}$. Using Schlage-Puchta's method \cite{Pu}, one shows that the last term in \eqref{equation mth moment} is an error term for fixed values of $q$, and from this we can conclude under GRH and LI that
$$ \E[H_q^m] = \sum_{\chi_1,...,\chi_m \neq \chi_0} \sum_{\substack{\gamma_{\chi_1},\gamma_{\chi_1}',...,\gamma_{\chi_m},\gamma_{\chi_m}' \\ \gamma_{\chi_1}-\gamma_{\chi_1}'+...,+\gamma_{\chi_m}-\gamma_{\chi_m}'=0 }} \frac 1{(\frac 12+i\gamma_{\chi_1})(\frac 12-i\gamma_{\chi_1}') \cdots (\frac 12+i\gamma_{\chi_m})(\frac 12-i\gamma_{\chi_m}') }  .$$
(This actually follows from Theorem \ref{theorem large deviations}, with the same method as in Lemma 2.5 of \cite{Fi}.)
Thus \eqref{equation mth moment conjecture} reduces to the statement that the last term in \eqref{equation mth moment} is an error term for every fixed $m$ and for values of $q$ not necessarily fixed.
Taking $m=1$, the last term in \eqref{equation mth moment} is 
$$ T(Y;q) := \sum_{\substack{\chi\bmod q \\ \chi \neq \chi_0}} \sum_{\gamma_{\chi}\neq\gamma_{\chi}'} \frac{(e^{i(\gamma_{\chi}-\gamma_{\chi}')Y}-1)(\gamma_{\chi}-\gamma_{\chi}')^{-1}}{(\frac 12+i\gamma_{\chi})(\frac 12-i\gamma_{\chi}') } \ll \sum_{\chi \bmod q} \sum_{\gamma_{\chi}\neq \gamma_{\chi}'} \frac{\min(Y,|\gamma_{\chi}-\gamma_{\chi}'|^{-1})}{(1+|\gamma_{\chi}|)(1+|\gamma_{\chi}'|) }  , $$
and we would like to show that in the range $(\log Y)^{1+\delta} \leq q \leq e^{o(Y)}$ we have $T(Y;q)  = o(Y\E[H_q]),$ that is $T(Y;q) =o(Y \phi(q) \log q)$. Arguing as in Lemma 2.6 of \cite{Fi} (this is Schlage-Puchta's technique \cite{Pu}), we have introducing a parameter $U\geq 1$ that
\begin{multline}
 T(Y;q) \ll  \sum_{\chi \bmod q} \sum_{\substack{\gamma_{\chi},\gamma_{\chi}' \\ |\gamma_{\chi}-\gamma_{\chi}'| \geq 1}} \frac{|\gamma_{\chi}-\gamma_{\chi}'|^{-1}}{(1+|\gamma_{\chi}|)(1+|\gamma_{\chi}'|) }  +  \sum_{\chi \bmod q} \sum_{\substack{\gamma_{\chi},\gamma_{\chi}' \\ 0<|\gamma_{\chi}-\gamma_{\chi}'| \leq 1 \\ \gamma_{\chi},\gamma_{\chi}' > U }} \frac{\min(Y,|\gamma_{\chi}-\gamma_{\chi}'|^{-1})}{(1+|\gamma_{\chi}|)(1+|\gamma_{\chi}'|) }  \\ + \sum_{\chi \bmod q} \sum_{\substack{\gamma_{\chi},\gamma_{\chi}' \\ 0<|\gamma_{\chi}-\gamma_{\chi}'| \leq 1 \\ \gamma_{\chi},\gamma_{\chi}' \leq U }} \frac{\min(Y,|\gamma_{\chi}-\gamma_{\chi}'|^{-1})}{(1+|\gamma_{\chi}|)(1+|\gamma_{\chi}'|) }  = I+II+III.
 \end{multline}
We compare the first term with an integral:
$$ I \ll \phi(q)\iint_{\substack{ x,y \geq 0 \\ |x-y|\geq 1}} \frac{\log(qx) \log(qy) dxdy}{ |x-y| (x+1) (y+1)} \ll \phi(q) (\log q)^2, $$
 which is $o(Y\phi(q) \log q)$ as soon as $\log q = o(Y)$ (this holds in our range of $q$). As for the second term, we have by the Riemann-von Mangoldt formula that
 $$ II \ll \sum_{\chi \bmod q} \sum_{\gamma_{\chi} \geq U} \frac {Y \log \gamma_{\chi} }{(1+\gamma_{\chi})^2}  \ll Y\phi(q) \frac{(\log (qU))^2}U. $$
The third term is the hardest, and requires to make the following conjecture on the pair correlation of zeros of Dirichlet $L$-functions:

\begin{conjecture}
Fix $C>0$. There exists a bounded function $W(t) \geq 0$ such that in the range $ C^{-1} \leq  S \leq \log q$, $1\leq U \leq C \log Q $ we have
\begin{equation}
\sum_{\chi \bmod q} \#\{ 0\leq \gamma_{\chi}, \gamma_{\chi}' \leq U : 0<|\gamma_{\chi}-\gamma_{\chi}' | \leq S/\log q \} \ll_C \phi(q) U\log (qU)  \int_{0}^S W(t) dt.
\label{conjecture pair correlation}
\end{equation} 
\end{conjecture}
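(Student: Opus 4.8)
We do not prove \eqref{conjecture pair correlation}; we only sketch the approach we expect would establish it, namely an adaptation of Montgomery's pair-correlation method to the family $\{L(s,\chi):\chi\bmod q\}$, in which the average over $\chi$ --- in the spirit of \"{O}zl\"{u}k's work on the pair correlation of zeros of Dirichlet $L$-functions --- disposes of the off-diagonal terms that are inaccessible for a single $L$-function.

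The first step is to replace the sharp count by a smooth majorant. Fix an even function $\Phi\geq 0$ with $\Phi(u)\geq\mathbf{1}_{[-1,1]}(u)$ and $\widehat\Phi\geq 0$ supported in $[-\Delta,\Delta]$ (a Beurling--Selberg-type majorant), and a nonnegative majorant $\psi$ of $\mathbf{1}_{[0,U]}$ with $\widehat\psi$ compactly supported. Writing $\delta:=S/\log q$ and $G_\chi(\xi):=\sum_{\gamma_\chi}\psi(\gamma_\chi)e(\xi\gamma_\chi/\delta)$, the left-hand side of \eqref{conjecture pair correlation} is at most
$$ \sum_{\chi\bmod q}\ \sum_{\gamma_\chi,\gamma_\chi'}\Phi\!\Big(\tfrac{\gamma_\chi-\gamma_\chi'}{\delta}\Big)\,\psi(\gamma_\chi)\,\psi(\gamma_\chi')\ =\ \sum_{\chi\bmod q}\int_{-\Delta}^{\Delta}\widehat\Phi(\xi)\,|G_\chi(\xi)|^2\,d\xi, $$
the over-counted diagonal $\gamma_\chi=\gamma_\chi'$ contributing only $\ll\phi(q)U\log(qU)$, which is absorbed by the right-hand side of \eqref{conjecture pair correlation} since $\int_0^S W\geq\int_0^{C^{-1}}W\gg_C 1$.

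Next I would feed in the explicit formula: for a test function $g$ with $\widehat g$ compactly supported one has $\sum_{\gamma_\chi}g(\gamma_\chi)=\tfrac1{2\pi}\int_{\mathbb R}g(t)\log\tfrac{q^*(|t|+2)}{2\pi}\,dt-\tfrac1{2\pi}\sum_{n}\tfrac{\Lambda(n)}{\sqrt n}(\chi(n)+\overline{\chi}(n))\widehat g(\tfrac{\log n}{2\pi})+O(1)$. Expanding $|G_\chi(\xi)|^2$ through this identity and summing over $\chi\bmod q$, the orthogonality relation $\sum_{\chi\bmod q}\chi(n)\overline{\chi}(m)=\phi(q)\mathbf{1}_{n\equiv m\,(q)}$ (up to imprimitivity corrections) produces three contributions. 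The archimedean square is $\phi(q)$ times a smooth double integral against $\Phi(\tfrac{t_1-t_2}{\delta})$; in the stated range $U=q^{o(1)}$, so the analytic conductor $q^*(|t|+2)$ stays $q^{1+o(1)}$ for $|t|\leq U$, and this term is $\ll\phi(q)U\log(qU)\cdot\delta\log(qU)\asymp\phi(q)U\log(qU)\,S$, of the required shape. The prime-sum square has a diagonal $n=m$ equal to $\phi(q)\sum_n\tfrac{\Lambda(n)^2}{n}|\widehat g(\tfrac{\log n}{2\pi})|^2$ integrated against $\psi^2$ --- the analogue of Montgomery's function $F(\alpha)$, from which the non-constant profile of $W(t)$ (the dip near $t=0$ reflecting repulsion between zeros) would emerge --- while its off-diagonal $n\equiv m\,(q)$, $n\neq m$, forces $\max(n,m)\geq q$ and is harmless provided $\Delta/\delta$ is small enough that the relevant $n$ obey $n<q$. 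The archimedean$\times$prime cross terms and the residual single prime sums are of lower order and can be handled by the same orthogonality together with GRH estimates in the style of Lemma~\ref{lemma b(chi)}. Assembling these and invoking the positivity of $\widehat\Phi$ (Montgomery's device for turning bounds on $\sum_{\chi}|G_\chi|^2$ into an honest count) would yield a suitable bounded $W(t)\geq 0$.

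The main obstacle is the small-$S$ regime together with uniformity. To keep the off-diagonal $n\equiv m\pmod q$, $n\neq m$, out of the picture one must take $\Delta$ at most of order $\delta\log q=S$; but then, since $\Phi$ only majorizes $\mathbf{1}_{[-1,1]}$ and has Fourier support of size $\lesssim S$, the function $u\mapsto\Phi(u/\delta)$ fails to decay until $|u|\asymp\delta/S=1/\log q$, so the smooth sum in fact over-counts pairs at separation up to the mean spacing $\asymp 2\pi/\log q$ of the zeros --- it bounds the left-hand side of \eqref{conjecture pair correlation} only by $\ll\phi(q)U\log(qU)$, which is too weak by a factor $\asymp 1/S$ once $S\leq 1$. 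Allowing instead $\Delta\asymp 1$ removes the over-counting but reinstates the off-diagonal with $n,m$ as large as $q^{O(1/S)}$, whose size (without cancellation) dwarfs the main term; controlling it would require, in effect, an averaged form of Montgomery's strong pair-correlation conjecture, i.e.\ bounds on $\sum_{\chi\bmod q}F_\chi(\alpha)$ for $\alpha$ well beyond $1$. For $S$ bounded below by a fixed absolute constant the programme above should nevertheless go through unconditionally; it is the full range $C^{-1}\leq S$ with $C$ arbitrary, and the uniformity in $q$ and $U$, that we cannot presently reach, and this is why \eqref{conjecture pair correlation} is stated as a conjecture.
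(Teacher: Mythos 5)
This statement is one of the paper's \emph{conjectures}: it is introduced as an unproved hypothesis on the pair correlation of low-lying zeros of Dirichlet $L$-functions, invoked only to control the term $III$ in the concluding heuristic, and the paper supplies no proof of it whatsoever. So there is nothing in the paper to compare your argument against, and you are right not to claim a proof: your submission is a programme sketch plus an honest account of where it breaks, which is exactly the status the statement has in the paper.

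As a sketch, your outline (Beurling--Selberg majorant, explicit formula, orthogonality over $\chi\bmod q$ in the spirit of \"Ozl\"uk's averaged pair correlation, positivity of $\widehat\Phi$) is the natural route, and your diagnosis of the tension between the Fourier support $\Delta$ of the majorant and the off-diagonal terms $n\equiv m\pmod q$, $n\neq m$, is the genuine obstruction. Two small corrections to the framing. First, the conjectured bound is $\ll_C$, so the implied constant is allowed to depend on $C$ and $S$ is bounded below by the fixed constant $C^{-1}$; the difficulty is therefore not ``$C$ arbitrary'' but rather the uniformity in $q$ and $U$ at separations comparable to the mean spacing $\asymp 1/\log q$, where unconditionally one only knows $O(\log q)$ zeros per interval of that length (and even under GRH only $O(\log q/\log\log q)$), so the trivial bound misses the target by a factor of order $\log q/S$. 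Second, note that $U\leq C\log Q$ (so $\log(qU)\asymp\log q$) keeps the analytic conductor tame, which is consistent with your main-term computation; the real content of the conjecture is precisely the averaged repulsion/non-clustering statement at the scale of the mean spacing, which is why the paper leaves it as a conjecture rather than deducing it from GRH.
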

Assuming this conjecture, we have by fixing $\epsilon>0$ and using summation by parts that
\begin{align*}
III &\leq  \sum_{\chi \bmod q} \sum_{\substack{\gamma_{\chi},\gamma_{\chi}' \\ 0<|\gamma_{\chi}-\gamma_{\chi}'| \leq \frac{\epsilon}{\log q} \\ \gamma_{\chi},\gamma_{\chi}' \leq U }} \frac{Y}{(1+|\gamma_{\chi}|)(1+|\gamma_{\chi}'|) } +  \sum_{\chi \bmod q} \sum_{\substack{\gamma_{\chi},\gamma_{\chi}' \\ \frac{\epsilon}{\log q}  \leq  |\gamma_{\chi}-\gamma_{\chi}'| \leq 1\\ \gamma_{\chi},\gamma_{\chi}' \leq U }} \frac{|\gamma_{\chi}-\gamma_{\chi}'|^{-1}}{(1+|\gamma_{\chi}|)(1+|\gamma_{\chi}'|) } \\
& \ll Y\phi(q) \epsilon \log q +  \phi(q) \frac{\log q}{\epsilon} \log q . 
\end{align*}  

Collecting all these terms we obtain that
$$ I+II+III \ll \phi(q) (\log q)^2 +Y\phi(q) \frac{(\log (qU))^2}U+Y\phi(q) \epsilon \log q +  \phi(q) \frac{\log q}{\epsilon} \log q ,  $$
which by taking $U=(\log q)/\epsilon $ is 
$$ \ll \phi(q) (\log q)^2  +  \epsilon Y\phi(q) \log q + \phi(q) \frac{(\log q)^2}{\epsilon}, $$
a quantity which is $= o(Y\phi(q) \log q)$ when $q$ is in the range $(\log Y)^{1+\delta} \leq q \leq e^{o(Y)} $. This justifies why \eqref{equation mth moment conjecture} should hold in this range.

One can justify \eqref{equation mth moment conjecture} for all $m\geq 1$ with a similar argument, under an assumption on the statistics of zeros of Dirichlet $L$-functions. 

Note that in the range $(\log x)^{1+\delta}  \leq q \leq x^{o(1)}$, knowing the first two moments of $e^{-y/2}V(e^y;q)$ is sufficient for justifying \eqref{equation hooley conjecture}. Indeed, Chebyshev's inequality shows that
$$\P[|H_q - \E[H_q]| >\epsilon\phi(q) \log q] \ll \frac 1{\epsilon^2 \phi(q)}, $$
and the following argument gives the desired result in this range of $q$.

We now show how to support Conjecture \ref{conjecture Hooley in large range}, assuming that \eqref{equation mth moment conjecture} holds in the range $(\log Y)^{1+\delta} \leq q \leq e^{o(Y)}$ (the reason why we chose this upper bound is that in the range $(\log x)^{1+\delta}  \leq q \leq x^{o(1)}$ we only need \eqref{equation mth moment conjecture} to hold for $m=1,2$). Theorem \ref{theorem large deviations} shows that under GRH and LI, 
$$ meas\{ y\leq Y : \phi(q) e^{-y}V(e^y;q) \in (\alpha,\beta) \} \sim Y \P[H_q\in (\alpha,\beta)], $$
and so in the range $(\log Y)^{1+\delta} \leq q \leq e^{o(Y)}$, Theorem \ref{theorem large deviations} gives
\begin{align*} meas\{ y\leq Y : e^{-y}V(e^y;q) &\notin ( (1-\epsilon)\log q,(1+\epsilon)\log q ) \} \\&\sim Y \P[\tilde H_q\notin (-\epsilon \phi(q)\log q,\epsilon\phi(q) \log q)] \\
& \leq 2Y \exp \left(-c_2\epsilon^2 \phi(q) \right).
\end{align*}

However, since $e^{-y}V(e^y;q)$ can be understood by looking at the equidistribution of the vector $(e^{i\gamma_1y},...,e^{i\gamma_ky})\in \mathbb T^k$, we expect that the smallest value of $y$ for which $|e^{-y}V(e^y;q)-\log q|\neq o(\log q)$ is about $y\approx \exp \left(  c \phi(q) \right)$. That is to say, for $q\geq (\log y)^{1+\delta}$ we have $e^{-y}V(e^y;q)\sim \log q$, which is equivalent to Conjecture \ref{conjecture Hooley in large range}.

\begin{remark}
Theorem \ref{theorem large deviations} and Remark \ref{remark extended range epsilon} even suggest the following estimate, for \\$(\log\log x)^{1+\delta}\leq q \leq x^{o(1)}$:
\begin{equation}
\label{equation very precise conjecture}
V(x;q) 
= x \mathcal L(q) \left( 1+O\left( \Psi(x) \sqrt{\frac{\log\log x}{\phi(q)}} \right)\right),
\end{equation} 
where $\Psi(x)$ is any function tending to infinity with $x$ and $$\mathcal L(q):= \log q -\gamma -\log (2\pi) -\sum_{p\mid q} \frac{\log p}{p-1} .  $$
\end{remark}

\begin{remark}

It would be interesting to investigate the large deviations of $H_q$ in Theorem \ref{theorem large deviations} for larger values of $\epsilon$. Indeed we believe that a transition happens near $\epsilon\asymp 1$, and this could give information about $V(x;q)$ in the range $q\leq \log\log x$. For example one could make a prediction on the best possible bound for $V(x;q)$ in this range. If $q$ is fixed, then one can show using \eqref{equation H n terms of characters} that the limiting distribution of $e^{-y}V(e^y;q)$ has double-exponentially decaying tails (this follows from Montgomery's work \cite{Mo}), resulting in the prediction
\begin{equation}
\label{equation montgomery bound}
 V(x;q) \ll x (\log\log\log x)^4.
\end{equation} 
Again this is for fixed values of $q$, and shows that a transition happens in the range $1\leq q \leq (\log\log x)^{1+\delta}$, in transferring from \eqref{equation very precise conjecture} to \eqref{equation montgomery bound}.

\end{remark}

\section*{Acknowledgements}
I would like to thank John Friedlander, Jeffrey C. Lagarias, Steven J. Miller, Hugh L. Montgomery and Maksym Radziwill for their useful comments. I also thank Greg Martin for his help with Lemma \ref{lemma FiMa}. I thank V\'ictor P\'erez Abreu for introducing me to Berg's paper, for fruitful conversations and for inviting me to the CIMAT in Guanajuato, M\'exico. This work was accomplished partly at the Institute for Advanced Study and at the University of Michigan, and was supported by an NSERC Postdoctoral Fellowship as well as NSF grant DMS-0635607.

\appendix

\section{Some comments on Montgomery's Conjecture}

Montgomery's Conjecture \cite{M} states that if we fix $\epsilon>0$, then
$$\psi(x;q,a) = \frac{x}{\phi(q)} +O_{\epsilon}\left(\frac{x^{\frac 12+\epsilon}}{q^{\frac 12}} \right), $$
uniformly for all coprime integers $a,q \leq x^{\frac 12}$.  
The Generalized Riemann Hypothesis implies that for $\chi \neq \chi_0$, 
$$ \psi(x,\chi) := \sum_{n\leq x} \Lambda(n) \chi(n) \ll x^{\frac 12} (\log x)^2, $$
and thus using orthogonality relations we obtain the following standard estimate:
$$ \psi(x;q,a) - \frac{\psi(x,\chi_0)}{\phi(q)} =\frac 1{\phi(q)} \sum_{\substack{\chi\bmod q \\ \chi \neq \chi_0}} \overline{\chi}(a) \psi(x,\chi) \ll x^{\frac 12} (\log x)^2.$$
Montgomery's Conjecture is motivated by the fact that we did not exploit any cancellations in the last estimate. Indeed, since under GRH and LI the quantity $\overline{\chi}(a)e^{-y/2}\psi(e^y,\chi)$ is distributed like a random variable of mean zero and variance asymptotically $\log q^*$, we have that the limiting distribution of $e^{-y/2}\sum_{\substack{\chi\bmod q \\ \chi \neq \chi_0}} \overline{\chi}(a) \psi(e^y,\chi)$ has variance asymptotically $\phi(q) \log q$. This means that this last sum is normally of order $(\phi(q) \log q)^{\frac 12+\epsilon}$, which in turn gives that $\psi(x;q,a) - \frac{\psi(x,\chi_0)}{\phi(q)}$ is normally of order $x^{\frac 12} / \phi(q)^{\frac 12-\epsilon}$, that is Montgomery's Conjecture should hold.

As for the quantity $\psi(x,\chi)$, we do not expect any improvement under GRH of the form 
$$\psi(x,\chi) \ll \frac{x^{\frac 12 +\epsilon}}{q^{\theta}},$$
for any fixed $\theta >0$. Indeed this last bound can readily be disproved under GRH using either \eqref{equation conditional lower bound on V(x;q)} or \eqref{equation Goldston Vaughan} (in the range $x/(\log x)^A \leq q\leq x$ it can even be disproved unconditionally using \eqref{equation montgomery theorem}). Moreover, we believe that it is interesting to see what Montgomery's Conjecture implies on this quantity, using the orthogonality relations. Indeed under this conjecture we have for $\chi \neq \chi_0$ that
$$ \psi(x,\chi) = \sum_{\substack{a \bmod q \\ (a,q)=1}} \chi(a) \left(\psi(x;q,a) - \frac{\psi(x,\chi_0)}{\phi(q)} \right) \ll \phi(q) \frac{x^{\frac 12+\epsilon}}{q^{\frac 12}} \ll q^{\frac 12} x^{\frac 12+\epsilon},  $$
which is worse that GRH. Again, the discrepancy between this and the 'true bound' $\psi(x,\chi) \ll x^{\frac 12} (\log x)^2$ comes from the fact that square-root cancellation occurs in the last sum.

\end{document}